\documentclass{amsart}

\usepackage{amsthm,amssymb,latexsym,amsmath}
\usepackage[all]{xy}
\usepackage[english]{babel}
\usepackage[utf8x]{inputenc}
\usepackage{color}

\newcommand{\CC}{{\mathbb C}}\newcommand{\C}{\mathbb{C}}
\newcommand{\p}[1]{{\mathbb{P}^{#1}}}

\newcommand{\PP}{{\mathbb P}} \newcommand{\bA}{{\mathbb A}}
\newcommand{\op}[1]{{\mathcal O}_{\mathbb{P}^{#1}}}

\newcommand\pt{{\mathrm{\hspace{0.2ex}p\hspace{-0.23ex}t}}}

\newcommand{\OC}{{\mathcal O}_{C}} \newcommand{\IC}{I_{C/\p3}}
\newcommand{\OD}{{\mathcal O}_{D}} \newcommand{\ID}{I_{D/\p3}}

\newcommand{\calc}{{\mathcal C}}
\newcommand{\cald}{{\mathcal D}}
\newcommand{\cale}{{\mathcal E}}

\newcommand{\cali}{{\mathcal I}}

\newcommand{\call}{{\mathcal L}}
\newcommand{\calm}{{\mathcal M}}

\newcommand{\calo}{{\mathcal O}}
\newcommand{\calp}{{\mathcal P}}

\newcommand{\calr}{{\mathcal R}}

\newcommand{\calt}{{\mathcal T}}
\newcommand{\calz}{{\mathcal Z}}

\newcommand{\f}{\varphi}

\newcommand{\EE}{{\mathcal E}}
\newcommand{\HH}{\mathbf H}
\newcommand{\MM}{{\mathcal T}}
\newcommand{\bP}{{\mathcal P}}
\newcommand{\RR}{{\mathcal R}}
\newcommand{\XX}{{\mathcal X}}

\newcommand{\E}{{\mathcal E}}
\newcommand{\F}{{\mathcal F}}
\newcommand{\G}{{\mathcal G}}
\newcommand{\I}{{\mathcal I}}

\newcommand{\Ker}{{\mathcal Ker}}

\newcommand{\Image}{{\mathcal Im}}
\newcommand{\Tor}{{\mathcal Tor}}

\newcommand{\Ext}{\operatorname{Ext}}

\newcommand{\Hom}{\operatorname{Hom}}
\newcommand{\Hilb}{\operatorname{Hilb}}
\newcommand{\h}{\operatorname{h}}
\def\H{\operatorname{H}}
\newcommand{\M}{\operatorname{M}}
\newcommand{\rat}{\operatorname{rat}}
\newcommand{\support}{\operatorname{supp}}

\DeclareMathOperator{\im}{im}

\DeclareMathOperator{\Pic}{{Pic}}

\newcommand{\dual}{{\scriptscriptstyle \operatorname{D}}}
\newcommand{\st}{{\scriptstyle \operatorname{s}}}

\newcommand{\tensor}{\otimes}
\newcommand{\lra}{\longrightarrow}

\newcommand{\ba}{\begin{array}}
\newcommand{\ea}{\end{array}}

\newcommand{\inhom}{{\mathcal H}{\it om}}

\newcommand{\onto}{\twoheadrightarrow}

\newlength{\rrrr}
\newcommand{\isom}[1]{{\settowidth{\rrrr}{$\scriptstyle{x#1x}$}
\xrightarrow{\makebox[\rrrr]{$\scriptstyle{#1}$}}
\hspace{-0.5\rrrr }\hspace{-1.1 em}
\raisebox{- 0.5 ex}{$\sim$}\hspace{0.7\rrrr }
}}

\newcommand{\intoo}[1]{\:
\xymatrix@1{\ar@{^(->}[r]^{#1}&}\:}
\newcommand{\ontoo}[1]{\:
\xymatrix@1{\ar@{->>}[r]^{#1}&}\:}

\newtheorem{theorem}{Theorem}
\newtheorem{mthm}{Main Theorem}
\newtheorem{proposition}[theorem]{Proposition}
\newtheorem{lemma}[theorem]{Lemma}
\newtheorem{corollary}[theorem]{Corollary}

\newtheorem{remark}[theorem]{Remark}

\begin{document}

\title{Moduli spaces of rank 2 instanton sheaves on the projective space}

\author{Marcos Jardim}
\address{IMECC - UNICAMP \\
Departamento de Matem\'atica \\ Rua S\'ergio Buarque de Holanda, 651 \\
13083-970 Campinas-SP, Brazil}
\email{jardim@ime.unicamp.br}

\author{Mario Maican}
\address{Institute of Mathematics of the Romanian Academy, Calea Grivitei 21, Bucharest 010702, Romania}
\email{maican@imar.ro}

\author{Alexander S. Tikhomirov}
\address{Department of Mathematics\\
National Research University
Higher School of Economics\\
6 Usacheva Street\\ 
119048 Moscow, Russia}
\email{astikhomirov@mail.ru}

\begin{abstract}
We study the irreducible components of the moduli space of instanton sheaves on $\p3$, that is rank 2 torsion free sheaves $E$ with $c_1(E)=c_3(E)=0$ satisfying $h^1(E(-2))=h^2(E(-2))=0$. In particular, we classify all instanton sheaves with $c_2(E)\le4$, describing all the irreducible components of their moduli space. A key ingredient for our argument is the study of the moduli space $\calt(d)$ of stable sheaves on $\p3$ with Hilbert polynomial $P(t)=dt$, which contains, as an open subset, the moduli space of rank 0 instanton sheaves of multiplicity $d$; we describe all the irreducible components of $\calt(d)$ for $d\le4$. 
\end{abstract}

\maketitle


\section{Introduction}

Instanton bundles on $\C\p3$ were introduced by Atiyah, Drinfeld, Hitchin and Manin in the late 1970's as the holomorphic counterparts, via twistor theory, to anti-self-dual connections with finite energy (instantons) on the 4-dimensional round sphere $S^4$. To be more precise, an \emph{instanton bundle of charge $n$} is a $\mu$-stable rank 2 bundle $E$ on $\p3$ with $c_1(E)=0$ and $c_2(E)=n$ satisfying the cohomological condition $h^1(E(-2))=0$; equivalently, an instanton bundle of charge $n$ is a locally free sheaf which arises as cohomology of a linear monad of the form
\begin{equation} \label{instanton monad}
0 \to n\cdot\op3(-1) \longrightarrow (2+2n)\cdot\op3 \longrightarrow n\cdot\op3(1) \to 0 .
\end{equation}

The moduli space $\cali(n)$ of such objects has been thoroughly studied in the past 35 years by various authors and it is now known to be an irreducible \cite{T1,T2}, nonsingular \cite{JV} affine \cite{CO} variety of dimension $8n-3$. 

The closure of $\cali(n)$ within the moduli space $\calm(n)$ of semistable rank 2 sheaves with Chern classes $c_1=0$, $c_2=n$ and $c_3=0$ contains non locally free sheaves which also arise as cohomology of monads of the form (\ref{instanton monad}). Such \emph{instanton sheaves} can alternatively be defined as rank 2 torsion free sheaves satisfying the cohomological conditions
$$ h^0(E(-1)) = h^1(E(-2)) = h^2(E(-2)) = h^3(E(-3)) = 0. $$
We prove that such sheaves are always stable, see Theorem \ref{instanton stability} below, so they admit a moduli space $\call(n)$ regarded as an open subset of $\calm(n)$ which, of course, contains $\cali(n)$. 

The spaces $\call(1)$ and $\call(2)$ were essentially known to be irreducible, see details in the first few paragraphs of Section \ref{L(n)} below. However, $\call(3)$ was observed to have at least two irreducible components \cite[Remark 8.6]{JMT1}, while several new components of $\call(n)$ were constructed in \cite{JMT2}. 

The main goal of this paper is to characterize the irreducible components of $\call(3)$ and $\call(4)$. We prove:

\medskip

\begin{mthm} \label{mthm1} ~~
\begin{itemize}
\item[(i)] $\call(3)$ is a connected quasi-projective variety consisting of exactly two irreducible components each of dimension 21;
\item[(ii)] $\call(4)$ is a connected quasi-projective variety consisting of exactly four irreducible components, three of dimension 29 and one of dimension 32.
\end{itemize}
\end{mthm}

For every instanton sheaf $E$, the quotient $E^{\vee\vee}/E$ is a semistable sheaf with Hilbert polynomial $d\cdot(t+2)$ (see Section \ref{SIS} below), therefore an essential ingredient for the proof of Main Theorem \ref{mthm1} is the study of the moduli space $\calt(d)$ of semistable sheaves on $\p3$ with Hilbert polynomial $P(t)=d\cdot t$. Since these spaces are also interesting in their own right, we prove:

\begin{mthm} \label{mthm2} ~~
\begin{itemize}
\item[(i)] $\calt(1)$ is an irreducible projective variety of dimension 5;
\item[(ii)] $\calt(2)$ is a connected projective variety consisting of exactly two irreducible components of dimension 8;
\item[(iii)] $\calt(3)$ is a connected projective variety consisting of exactly four irreducible components, two of dimension 12 and two of dimension 13.
\item[(iv)] $\calt(4)$ is a connected projective variety consisting of exactly eight irreducible components, four of dimension 16, two of dimension 17, one of dimension 18 and one of dimension 20.
\end{itemize} \end{mthm}

We also give a precise description of a generic point in each of the irreducible components mentioned in the statement of the theorem, see Section \ref{1d chi=0}.


\bigskip

{\bf Acknowledgements.}
MJ is partially supported by the CNPq grant number 303332/2014-0, and the FAPESP grants number 2014/14743-8 and 2016/03759-6; this work was completed during a visit to the University of Edinburgh, and he is grateful for its hospitality. MJ also thanks Daniele Faenzi and Simone Marchesi for their help in the proof of Theorem \ref{instanton stability} below.
AST was supported by a subsidy to the HSE from the Government of the Russian Federation for the implementation of the Global Competitiveness Program. AST also acknowledges the support from the Max Planck Institute for Mathematics in Bonn, where this work was finished during the winter of 2017.


\section{Stability of instanton sheaves} \label{SIS}

Recall from \cite{J-i} that a torsion free sheaf $E$ on $\p3$ is called an \emph{instanton sheaf} if $c_1(E)=0$ and the following cohomological conditions hold
$$ h^0(E(-1))=h^1(E(-2))=h^{2}(E(-2))=h^3(E(-3))=0. $$
The integer $n:=-\chi(E(-1))$ is called the charge of $E$; it is easy to check that $n=h^1(E(-1))=c_2(E)$, and that $c_3(E)=0$. The trivial sheaf $r\cdot\op3$ of rank $r$ is considered as an instanton sheaf of charge zero. In this paper, we will only be interested in rank 2 instanton sheaves.

Recall that the singular locus ${\rm Sing}(G)$ of a coherent sheaf $G$ on a nonsingular projective variety $X$ is given by
$$  {\rm Sing}(G) := \{ x\in X ~|~ G_x ~~\text{is not free over}~~ \mathcal{O}_{X,x} \} , $$
where $G_x$ denotes the stalk of $G$ at a point $x$ and $\mathcal{O}_{X,x}$ is its local ring. The following result, proved in \cite[Main Theorem]{JG}, provides a key piece of information regarding the singular loci of rank 2 instanton sheaves.

\begin{theorem}\label{jg-thm}
If $E$ is a non locally free instanton sheaf of rank $2$ on $\p3$, then
\begin{itemize}
\item[(i)] its singular locus has pure dimension $1$;
\item[(ii)] $E^{\vee\vee}$ is a (possibly trivial) locally free instanton sheaf.
\end{itemize}
\end{theorem}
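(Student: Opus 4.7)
My plan is to study the canonical short exact sequence
\[
0 \to E \to E^{\vee\vee} \to Q \to 0, \qquad Q := E^{\vee\vee}/E,
\]
in which $Q$ is supported in codimension $\ge 2$, hence $\dim Q \le 1$, because a torsion-free sheaf agrees with its reflexive hull in codimension $1$. Since $E^{\vee\vee}$ is reflexive of rank $2$ on the smooth three-fold $\p3$, its singular locus $S := \sing(E^{\vee\vee})$ is $0$-dimensional and $\sing(E) = S \cup \supp(Q)$. Thus (ii) amounts to proving $S = \emptyset$, and (i) to proving that the maximal $0$-dimensional subsheaf $T \subseteq Q$ vanishes.

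I would prove (ii) first, by a $\chi$-parity squeeze. By Hartshorne's criterion for reflexive rank-$2$ sheaves on $\p3$, $c_3(E^{\vee\vee}) \ge 0$, with equality if and only if $E^{\vee\vee}$ is locally free. On the other hand, Riemann-Roch gives $\chi(F(-2)) = c_3(F)/2$ for any reflexive rank-$2$ sheaf $F$ on $\p3$ with $c_1(F) = 0$, so additivity in the sequence above yields
\[
\tfrac{1}{2}\,c_3(E^{\vee\vee}) = \chi(E^{\vee\vee}(-2)) = \chi(E(-2)) + \chi(Q(-2)) = \chi(Q(-2)),
\]
the last equality using $\chi(E(-2)) = c_3(E)/2 = 0$. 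By Theorem \ref{instanton stability}, $E$ is stable, hence so is its reflexive hull $E^{\vee\vee}$, so $H^0(E^{\vee\vee}(-2)) = 0$; combined with the instanton vanishing $H^1(E(-2)) = 0$, the twisted cohomology sequence gives $H^0(Q(-2)) = 0$, while $\dim Q \le 1$ gives $H^2(Q(-2)) = H^3(Q(-2)) = 0$. Hence $\chi(Q(-2)) = -h^1(Q(-2)) \le 0$, and comparing with $c_3(E^{\vee\vee}) \ge 0$ forces $c_3(E^{\vee\vee}) = 0$, so $E^{\vee\vee}$ is locally free.

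For (i), I would apply $R\inhom(-, \calo_{\p3})$ to the sequence and use $\inext^{<2}(Q, \calo_{\p3}) = 0$ (from $\codim Q \ge 2$), $\inext^{\ge 2}(E^{\vee\vee}, \calo_{\p3}) = 0$ (reflexivity of $E^{\vee\vee}$), and $\inext^{3}(E, \calo_{\p3}) = 0$ (torsion-freeness of $E$) to extract an isomorphism $\inext^{3}(Q, \calo_{\p3}) \simeq \inext^{2}(E, \calo_{\p3})$; by Grothendieck duality the left side is a $0$-dimensional sheaf of length $\lt(T)$. To kill the right side I would feed the instanton vanishing $H^1(E(-2)) = 0$, equivalently $\Ext^2(E, \calo_{\p3}(-2)) = 0$ via Serre duality, into the local-to-global spectral sequence $E_2^{p,q} = H^p(\inext^q(E, \calo_{\p3}(-2))) \Rightarrow \Ext^{p+q}(E, \calo_{\p3}(-2))$. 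The differential $d_2$ out of $E_2^{0,2}$ vanishes because $\inext^1(E, \calo)$ has support of dimension $\le 1$, so $H^2(\inext^1(E, \calo_{\p3}(-2))) = 0$; and the only remaining differential $d_3$ lands in a quotient of $H^3(E^\vee(-2))$, which Serre-dualises to $H^0(E^{\vee\vee}(-2)) = 0$. Hence $E_\infty^{0,2} = E_2^{0,2} = H^0(\inext^2(E, \calo_{\p3}(-2)))$, and the vanishing $\Ext^2(E, \calo_{\p3}(-2)) = 0$ forces it to be zero; since $\inext^2(E, \calo_{\p3})$ has $0$-dimensional support, the sheaf itself must vanish, so $T = 0$ and $Q$ is pure of dimension $1$.

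The delicate step is the spectral-sequence argument in (i), whose success hinges on two subsidiary vanishings: $H^2(\inext^1(E, \calo_{\p3}(-2))) = 0$ from the low-dimensional support of the non-reflexive locus, and $H^3(E^\vee(-2)) = 0$ from Serre duality combined with stability. The $\chi$-parity squeeze in (ii) is conceptually pivotal: it is the tension between Hartshorne's nonnegativity $c_3(E^{\vee\vee}) \ge 0$ and the instanton-driven $\chi(Q(-2)) \le 0$ that pins down local freeness of the reflexive hull.
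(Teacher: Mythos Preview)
The paper does not prove this theorem; it is quoted from \cite{JG}. So there is no in-paper proof to compare against, and I can only assess the internal soundness of your argument.

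Your overall architecture is correct, but there is a genuine circularity. In both parts you need $H^0(E^{\vee\vee}(-2))=0$: in (ii) to obtain $H^0(Q(-2))=0$ from the twisted cohomology sequence, and in (i) to kill the $d_3$-target $E_2^{3,0}=H^3(E^\vee(-2))\cong H^0(E^{\vee\vee}(-2))^*$. You derive this vanishing by invoking Theorem~\ref{instanton stability}. But in this paper the proof of Theorem~\ref{instanton stability} \emph{uses} Theorem~\ref{jg-thm}: the very first sentence of that proof assumes that $E^{\vee\vee}$ is a (possibly trivial) locally free instanton sheaf in order to reduce to the quasi-trivial case. So as written your argument is circular.

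The fix is short and avoids stability altogether. The instanton condition $h^3(E(-3))=0$ Serre-dualises to $\Hom(E,\op3(-1))=0$, i.e.\ $H^0(E^\vee(-1))=0$. If $H^0(E^{\vee\vee}(-1))\ne 0$, saturate a nonzero section to get
\[
0 \lra \op3(a) \lra E^{\vee\vee} \lra I_Z(-a) \lra 0, \qquad a\ge 1,
\]
with $Z$ of codimension $\ge 2$; dualising gives $\op3(a)\hookrightarrow (E^{\vee\vee})^\vee=E^\vee$, hence $H^0(E^\vee(-1))\supset H^0(\op3(a-1))\ne 0$, a contradiction. Thus $H^0(E^{\vee\vee}(-1))=0$, and a fortiori $H^0(E^{\vee\vee}(-2))=0$. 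With this replacement your spectral-sequence argument for (i) and your $\chi$-squeeze for local freeness in (ii) go through. Two minor points remain: in (i) your claim that $\inext^3(Q,\op3)$ has length exactly $\lt(T)$ is not needed (you only use $\lt(T)\le \lt(\inext^3(Q,\op3))$, which follows from $0\to T\to Q\to Q'\to 0$); and in (ii) you should also record that $E^{\vee\vee}$ inherits the instanton cohomological vanishings, which follows immediately from the long exact sequence once $h^0(Q(-2))=h^1(Q(-2))=0$.
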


\begin{remark} \label{remark 4} \rm
In fact, the quotient sheaf $Q_E:=E^{\vee\vee}/E$ is a \emph{rank 0 instanton sheaf}, in the sense of \cite[Section 6.1]{hauzer}; see also \cite[Section 3.2]{JG}. More precisely, a \emph{rank $0$ instanton sheaf} is a coherent sheaf $Q$ on $\p3$ such that $h^0(Q(-2))=h^1(Q(-2))=0$; the integer $d:=h^0(Q(-1))$ is called the \emph{degree} of $Q$.

The Hilbert polynomial of a rank 2 instanton sheaf $E$ (in fact, of any coherent sheaf on $\p3$ of rank 2 with $c_1=0$, $c_2=n$ and $c_3=0$) is given by
\begin{equation}\label{pe(t)}
P_E(t) = \frac{1}{3}(t+3)\cdot(t+2)\cdot(t+1) - n\cdot(t+2)  = 2\cdot\chi(\op3(t)) - n\cdot(t+2) .
\end{equation}
Let $n':=c_2(E^{\vee\vee})\ge 0$; it follows from the standard sequence 
\begin{equation} \label{std dual sqc}
0 \to E \to E^{\vee\vee} \to Q_E \to 0
\end{equation}
that
$$ P_{Q_E}(t)=d\cdot(t+2) ~~ {\rm where} ~~ d:=n-n' ~. $$
Note that the $d=n-n'$ is precisely the multiplicity of $Q_E$ as a rank 0 instanton sheaf. 
\end{remark}

Rank 0 instanton sheaves can be characterized in the following way.

\begin{proposition} \label{generic_case}
Every rank 0 instanton sheaf $Q$ admits a resolution of the form
\begin{equation}
\label{generic_resolution}
0 \lra d\cdot\op3(-1) \lra 2d\cdot\op3 \lra d\cdot\op3(1) \lra Q \lra 0.
\end{equation}
\end{proposition}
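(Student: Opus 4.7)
The plan is to construct the resolution in two stages: first a surjection $d\cdot\op3(1)\onto Q$ obtained from Castelnuovo--Mumford regularity, then a resolution of its kernel via the Beilinson spectral sequence.

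Since $Q$ has Hilbert polynomial $d(t+2)$, as observed in the preceding remark, it is one-dimensional, so $h^i(Q(k))=0$ for $i\ge 2$ and every $k\in\Z$. Combined with the hypotheses $h^0(Q(-2))=h^1(Q(-2))=0$ this means that $Q(-1)$ is $0$-regular in the sense of Castelnuovo--Mumford; in particular $Q(-1)$ is globally generated by its $d=h^0(Q(-1))$ sections, and $h^1(Q(k))=0$ for every $k\ge -1$. Choosing a basis of $H^0(Q(-1))$ produces a surjection $d\cdot\op3(1)\onto Q$; let $K$ denote its kernel. Twisting the resulting short exact sequence
$$0\to K\to d\cdot\op3(1)\to Q\to 0$$
by $\op3(p)$ for $p=-3,-2,-1,0$ and chasing the long cohomology sequences (using the cohomology of $Q(p)$ computed from regularity and $\chi(Q(p))=d(p+2)$) yields $H^q(K(p))=0$ for $-3\le p\le 0$ and all $q\ge 0$, with the two exceptions $H^2(K(-3))\cong\C^d$ and $H^0(K)\cong\C^{2d}$.

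Now apply the Beilinson spectral sequence
$$E_1^{p,q}=H^q(K(p))\otimes\Omega^{-p}_{\p3}(-p)\;\Longrightarrow\;K,$$
concentrated on the diagonal $p+q=0$, with $-3\le p\le 0$ and $0\le q\le 3$. By the preceding calculation, only two $E_1$-terms are nonzero, namely $E_1^{-3,2}=d\cdot\op3(-1)$ (using $\Omega^3_{\p3}(3)\cong\op3(-1)$) and $E_1^{0,0}=2d\cdot\op3$, and the only possibly nontrivial differential between them is $d_3\colon E_3^{-3,2}\to E_3^{0,0}$. Convergence forces $d_3$ to be injective with cokernel $K$, yielding
$$0\to d\cdot\op3(-1)\to 2d\cdot\op3\to K\to 0.$$
Splicing this with the sequence above produces the required four-term resolution.

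The substantive work is the cohomological bookkeeping for $K$; once this is in place, the shape of the answer---line bundles only, with multiplicities $d,2d,d$---is dictated by the vanishings $H^q(K(-1))=H^q(K(-2))=0$, and Beilinson's theorem furnishes the differentials automatically. A direct application of Beilinson to $Q$ itself would instead leave a term $d\cdot\Omega^1_{\p3}(1)$ on the $E_1$ page coming from $H^0(Q(-1))$, requiring a more delicate analysis of higher differentials to reach the same conclusion; passing through $K$ is what makes the argument clean.
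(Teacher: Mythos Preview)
Your argument is correct and takes a genuinely different route from the paper's. The paper applies the Beilinson spectral sequence (in the form with line bundles $\op3(p)$ on the $E_1$-page) directly to $Q(-2)$: it shows that the row $q=0$ vanishes, and then the row $q=1$ already assembles into the desired three-term resolution, with exactness extracted from properties of the auxiliary maps $\varphi_i$ borrowed from \cite{choi_chung_maican}. You instead first invoke Castelnuovo--Mumford regularity to peel off the surjection $d\cdot\op3(1)\onto Q$, and then run Beilinson on the kernel $K$; because $K$ has almost no cohomology in the Beilinson window, only the two terms $E_1^{-3,2}$ and $E_1^{0,0}$ survive and a single $d_3$ differential does all the work. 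Your approach trades one extra preparatory step for a much cleaner spectral-sequence analysis and is entirely self-contained, whereas the paper's version leans on the cited reference for the structure of the $E_1$-page.

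One small point deserves to be made explicit: your cohomology chase at $p=-3$ uses $h^0(Q(-3))=0$, which does not follow from $0$-regularity of $Q(-1)$ alone. It does follow once one notes that $h^0(Q(-2))=0$ forces $Q$ to have no zero-dimensional subsheaves, so $Q$ is pure of dimension one and multiplication by a generic linear form gives an injection $Q(-3)\hookrightarrow Q(-2)$. (The paper passes over the same step with a bare ``hence also''.)
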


\begin{proof}
Consider the Beilinson spectral sequence from \cite[Section 6]{choi_chung_maican}, applied to the sheaf $Q':=Q(-2)$.
We have $\H^0(Q') = 0$, hence also $\H^0(Q' \tensor \Omega^1_\p3(1)) = 0$ and $\H^0(Q'(-1)) = 0$.
We adopt the notations of \cite[Section 6]{choi_chung_maican}.
Since $\ker(\f_5)/\Image(\f_4) = 0$, we deduce that $\H^0(Q' \tensor \Omega^2_\p3(2)) = 0$.
Thus, the bottom row of the $E^1$-term of the spectral sequence vanishes.
Since $\f_7$ is an isomorphism, we deduce that $\f_1$ is injective.
Since $\f_8$ is injective, we deduce that $\ker(\f_2) = \Image(\f_1)$.
The top row of the $E^1$-term of the spectral sequence yields the resolution
$$ 0 \lra \H^1(Q'(-1)) \tensor \op3(-3) \overset{\f_1}{\lra}
\H^1(Q' \tensor \Omega^2_\p3(2)) \tensor \op3(-2) \overset{\f_2}{\lra} $$
$$ \overset{\f_2}{\lra} \H^1(Q' \tensor \Omega^1_\p3(1)) \tensor \op3(-1) \lra Q' \lra 0. $$
We have
\[
\chi(Q' \tensor \Omega^1_\p3(1)) = -d, \qquad \chi(Q' \tensor \Omega^2_\p3(2)) = -2d, \qquad \chi(Q'(-1)) = -d,
\]
hence
\[
\h^1(Q' \tensor \Omega^1_\p3(1)) = d, \qquad \h^1(Q' \tensor \Omega^2_\p3(2)) = 2d, \qquad \h^1(Q'(-1)) = d.
\]
The above exact sequence yields (\ref{generic_resolution}).
\end{proof}

Let now examine the stability properties of instanton sheaves.

\begin{theorem}\label{instanton stability}
Every nontrivial rank 2 instanton sheaf $E$ is stable. In addition, a nontrivial instanton sheaf $E$ is $\mu$-stable if and only if its double dual $E^{\vee\vee}$ is nontrivial.
\end{theorem}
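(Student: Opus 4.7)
The plan is to address the three assertions in order: $\mu$-semistability of every nontrivial rank 2 instanton, Gieseker stability, and the characterization of $\mu$-stability via the double dual. The first is a direct consequence of $h^0(E(-1))=0$. Any $\mu$-destabilizing rank one subsheaf $F \subset E$ saturates to a rank one reflexive sheaf on $\p3$, hence to a line bundle $\op3(a)$ with $a \geq 1$ (since $c_1(E)=0$ and $\mu(F) > 0$). Composing with any nonzero $\op3(a-1) \into \op3(a)$ and twisting by $-1$ yields a nonzero section of $E(-1)$, contradiction.

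For Gieseker stability, any saturated rank one destabilizer must be of the form $\op3(a)$ with $a \leq 0$ by $\mu$-semistability. A direct computation of reduced Hilbert polynomials from (\ref{pe(t)}) gives
\[
p_{\op3(a)}(t) - p_E(t) = \binom{a+t+3}{3} - \binom{t+3}{3} + \tfrac{n}{2}(t+2),
\]
so $p_{\op3(a)} < p_E$ strictly for $t \gg 0$ whenever $a \leq -1$; the only remaining case to exclude is $F = \op3$, equivalently $h^0(E) > 0$. I would rule out a saturated inclusion $\op3 \into E$ for nontrivial $E$ by composing with $E \into E^{\vee\vee}$ and invoking Theorem~\ref{jg-thm}(ii): if $E^{\vee\vee}$ is a nontrivial locally free instanton, it is $\mu$-stable by the classical definition, so $h^0(E^{\vee\vee})=0$, a contradiction. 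If $E^{\vee\vee} = 2\op3$, a closer analysis of the dual sequence (\ref{std dual sqc}) together with the structure of $Q_E$ as a rank zero instanton (via Proposition~\ref{generic_case}) is needed to obtain the contradiction.

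The $\mu$-stability characterization is then extracted from the same line of argument. If $E^{\vee\vee}$ is nontrivial, the classical $\mu$-stability of $E^{\vee\vee}$ gives $h^0(E^{\vee\vee}) = 0$, hence $h^0(E) = 0$, so $E$ is $\mu$-stable. Conversely, if $E^{\vee\vee} = 2\op3$ and $E$ is nontrivial, the goal is to produce a nonzero element in $\ker\!\bigl(H^0(2\op3) \to H^0(Q_E)\bigr)$ via the long exact sequence associated to (\ref{std dual sqc}); such an element is exactly a section of $E$, witnessing the failure of $\mu$-stability.

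The main obstacle is the last step: forcing non-injectivity of $H^0(2\op3) \to H^0(Q_E)$ when $E^{\vee\vee}$ is trivial. This requires careful use of the explicit resolution of $Q_E$ from Proposition~\ref{generic_case} and of the rank zero instanton vanishings $h^0(Q_E(-2)) = h^1(Q_E(-2)) = 0$ to control the map on global sections and to combine it with the Hilbert polynomial comparison in the Gieseker step.
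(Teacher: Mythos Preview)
Your proposal has two genuine gaps, both rooted in a conflation of line bundles with general rank-one subsheaves.

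\textbf{Gieseker stability.} You assert that a saturated rank-one subsheaf of $E$ is of the form $\op3(a)$. On a threefold this is false: saturation only forces the quotient to be torsion free, so the subsheaf is merely torsion free of rank one, i.e.\ of the form $I_Z(a)$ with $\codim Z\ge 2$. Your Hilbert-polynomial comparison disposes of $a\le -1$, and the case $a=0$, $Z=\emptyset$ is excluded since every rank~$2$ instanton sheaf has $h^0(E)=0$. But the case $a=0$, $Z\ne\emptyset$ is exactly the substantive one, and it is not automatic: if $Z$ were $0$-dimensional of length $\ell$, one computes $p_{I_Z}(t)-p_E(t)=\tfrac{n}{2}(t+2)-\ell$, which is eventually positive, so such an $I_Z$ would destabilize. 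The paper's proof is devoted precisely to this point: writing $0\to I_C\to E\to I_D\to 0$, passing to the double-dual diagram (\ref{big diag}), and using the rank-$0$ instanton vanishings $h^0(Q_E(-2))=h^1(Q_E(-2))=0$ to force $C$ to have pure dimension~$1$ and $D$ to be $0$-dimensional; only then does the comparison (\ref{pe-pic}) finish the argument. Your outline never engages with this case.

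\textbf{The $\mu$-stability characterization.} Your plan for the direction ``$E^{\vee\vee}\simeq 2\cdot\op3 \Rightarrow E$ not $\mu$-stable'' is to produce a nonzero element of $\ker\!\big(H^0(2\cdot\op3)\to H^0(Q_E)\big)=H^0(E)$. But $h^0(E)=0$ for every rank~$2$ instanton sheaf, so this kernel is always zero and the approach cannot succeed; no amount of analysis of the resolution of $Q_E$ will change this. The failure of $\mu$-stability in the quasi-trivial case is witnessed not by a global section but by an ideal sheaf: the kernel of either composition $E\hookrightarrow 2\cdot\op3\twoheadrightarrow\op3$ is a rank-one subsheaf of slope~$0$. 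The paper's argument for this direction is in fact a one-liner and needs none of the machinery you propose: if $E$ is $\mu$-stable then so is $E^{\vee\vee}$ (a slope-$\ge 0$ subsheaf of $E^{\vee\vee}$ meets $E$ in a subsheaf of the same slope), and $2\cdot\op3$ is not $\mu$-stable.
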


\begin{proof}
Since rank 2 instanton sheaves have no global sections \cite[Prop. 11]{J-i}, every nontrivial locally free rank 2 instanton sheaf is $\mu$-stable; therefore, if $E^{\vee\vee}$ is nontrivial, then $E$ is also $\mu$-stable. Conversely, if $E$ is $\mu$-stable, then so is $E^{\vee\vee}$, hence it must be nontrivial. 

Therefore, in order to prove the first claim of the Theorem, it is enough to consider \emph{quasi-trivial instanton sheaves}, i.e. rank 2 instanton sheaves $E$ with $E^{\vee\vee}\simeq 2\cdot\op3$; note that the multiplicity of $Q_E$ is exactly $n=c_2(E)$.

Since $E$ has no global sections, it can only be destabilized by the ideal sheaf $\IC$ of a subscheme $C\subset\p3$. Moreover, we can assume that the quotient sheaf $E/\IC$ is torsion free, thus it is also the ideal sheaf $\ID$ of another subscheme $D\subset\p3$. We obtain two exact sequences
$$ \xymatrix{ 
         & 0   \ar[d]     & & & \\
         & \IC \ar[d]     & & & \\
0 \ar[r] & E \ar[d]\ar[r] & 2\cdot\op3 \ar[r] & Q_E \ar[r] & 0 \\
         & \ID \ar[d]     & & & \\
         & 0              & & &
}$$

Taking the double dual of the top vertical morphisms we obtain, using also the Snake Lemma, the following commutative diagram
\begin{equation}\label{big diag}
\xymatrix{ 
         & 0   \ar[d]       & 0    \ar[d]             & 0   \ar[d]       & \\
0 \ar[r] & \IC \ar[d]\ar[r] & \op3 \ar[d]\ar[r]       & \OC \ar[d]\ar[r] & 0 \\
0 \ar[r] & E \ar[d]\ar[r]   & 2\cdot\op3 \ar[d]\ar[r] & Q_E \ar[d]\ar[r] & 0 \\
0 \ar[r] & \ID \ar[d]\ar[r] & \op3 \ar[d]\ar[r]       & \OD \ar[d]\ar[r] & 0 \\
         & 0                & 0                       & 0                &
} \end{equation}
Since $h^0(Q_E(-2))=0$, then also $h^0(\OC(-2))=0$, hence $C$ must have pure dimension 1. Moreover, note also that $h^1(Q_E(-2))=0$ implies $h^1(\OD(-2))=0$. 

We show that $\dim D=0$. Indeed, assume that $D$ has dimension 1. Let $U$ be the maximal 0-dimensional subsheaf of $\OD$, and set ${\mathcal O}_{D'}:=\OD/U$; clearly, $D'$ has pure dimension 1. Next, let $D'':=D'_{\rm red}$ be the underlying reduced scheme. We end up with two exact sequences
$$ 0 \to U \to \OD \to {\mathcal O}_{D'} \to 0 ~~{\rm and}~~ $$
$$ 0 \to T \to {\mathcal O}_{D'} \to {\mathcal O}_{D''} \to 0, $$
so that the vanishing of $h^1(\OD(-2))$ forces $h^1({\mathcal O}_{D''}(-2))=0$.

Still, $D''$ may be reducible, so let $D'' := D''_1 \cup\dots\cup D''_p$ be its decomposition into irreducible components. For each index $j=1,\dots,p$ we obtain a sequence:
$$ 0 \to S_j \to {\mathcal O}_{D''} \to {\mathcal O}_{D''_j} \to 0, $$
thus also $h^1({\mathcal O}_{D''_j}(-2))=0$. Let $d_j$ and $p_j$ denote the degree and arithmetic genus of $D''_j$, respectively. It follows that
$$ 0\le h^0({\mathcal O}_{D''_j}(-2))=\chi({\mathcal O}_{D''_j}(-2))=-2d_j+1-p_j $$
hence $p_j\le-2d_j+1\le-1$, which is imposible for a reduced and irreducible curve.

Now let $\delta=h^0(\OD)$ be the length of $D$; since $\deg(C)=n$, we have
$$ P_{\IC}(t) = \chi(\op3(t)) - \chi(\OC(t)) = \chi(\op3(t)) - nt + (\delta-2n) . $$
Comparing with equation (\ref{pe(t)}), we have 
\begin{equation} \label{pe-pic}
\frac{P_E(t)}{2} - P_{\IC}(t) = \frac{n}{2}t + n - \delta ,
\end{equation}
which is positive for $n$ sufficiently large, and $E$ contains no destabilizing subsheaves.
\end{proof}

As a consequence of the proof above, we also obtain the following interesting fact.

\begin{corollary}
Every rank 2 quasi-trivial instanton on $\p3$ is an extention of the ideal of a 0-dimensional scheme $D$ by the ideal of a pure 1-dimensional scheme containing $D$.
\end{corollary}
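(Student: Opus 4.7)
The plan is to extract from the proof of Theorem \ref{instanton stability} an explicit construction of the desired extension, applied to an arbitrary quasi-trivial instanton. Given such an $E$, I would fix a splitting $2\cdot\op3=\op3\oplus\op3$ of $E^{\vee\vee}$ and, using the canonical sequence $0\to E\to 2\cdot\op3\to Q_E\to 0$, set $\IC:=E\cap\op3$ for the first summand and $\ID:=\operatorname{image}(E\to\op3)$ for the projection onto the second. This immediately yields the short exact sequence $0\to\IC\to E\to\ID\to 0$ and the full commutative diagram \eqref{big diag}. Both $\IC$ and $\ID$ are rank-1 ideal sheaves: neither can vanish, else $E$ would embed into a single copy of $\op3$ and have rank at most $1$; and $\IC\neq\op3$, else $\op3\hookrightarrow E$ would provide a nonzero global section of $E$, contradicting \cite[Prop.\ 11]{J-i}.

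The dimensional statements for $C$ and $D$ then follow verbatim from the arguments given in the proof of Theorem \ref{instanton stability} applied to this diagram. The right column of \eqref{big diag} is the short exact sequence $0\to\OC\to Q_E\to\OD\to 0$. The vanishing $h^0(Q_E(-2))=0$ forces $h^0(\OC(-2))=0$, hence $C$ is pure of dimension $1$; the vanishing $h^1(Q_E(-2))=0$ forces $h^1(\OD(-2))=0$, and the inductive passage (to the maximal $1$-dimensional quotient, then to its reduction, then to each irreducible component) used in the theorem rules out any $1$-dimensional component of $D$, so $\dim D=0$.

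The main remaining point, which I expect to be the most delicate, is the containment $D\subset C$. Set-theoretically it is immediate from the purity of $Q_E$: if some closed point of $\supp Q_E$ were missing from $\supp\OC$, its stalk in the quotient $\OD$ would agree with the stalk of $Q_E$ there and hence be of finite length, contradicting the absence of $0$-dimensional associated components in the pure $1$-dimensional sheaf $Q_E$; thus $\supp D\subset\supp\OC=C$. For the scheme-theoretic inclusion $\IC\subset\ID$, I would take the splitting of $2\cdot\op3$ to be generic, so that the class $[(\alpha,\beta)]\in H^0(Q_E)$ has annihilator equal to $\operatorname{ann}(Q_E)$; this gives $\IC=\operatorname{ann}(Q_E)\subset\operatorname{ann}(\OD)=\ID$ because $\OD$ is a quotient of $Q_E$, completing the proof.
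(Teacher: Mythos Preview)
Your approach is essentially that of the paper, which offers no separate proof and simply declares the corollary a consequence of the argument for Theorem~\ref{instanton stability}. Your explicit construction of $\IC=E\cap\op3$ and $\ID=\im(E\to\op3)$ from a chosen splitting of $E^{\vee\vee}\simeq 2\cdot\op3$ is a clean way to exhibit the extension and the diagram~\eqref{big diag}; the paper's proof of the theorem instead begins from a hypothetical rank-one saturated subsheaf of $E$, but the resulting diagram and its analysis are identical. Your verification that $C$ is pure of dimension~$1$ and $\dim D=0$ reproduces the paper's argument verbatim.

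On the containment $D\subset C$: the paper does not address this point at all, so here you are supplying more than the original. Your set-theoretic argument via the purity of $Q_E$ is correct (purity follows from $h^0(Q_E(-2))=0$, which rules out any $0$-dimensional subsheaf). The scheme-theoretic step---choosing a generic splitting so that the first section has annihilator exactly $\operatorname{ann}(Q_E)$---is plausible but not justified as written: you would need to show that the locus of $[\alpha:\beta]\in\p1$ where $\operatorname{ann}(\alpha s_1+\beta s_2)\supsetneq\operatorname{ann}(Q_E)$ is a \emph{proper} closed subset, and this requires an argument (for instance, checking it at each associated point of the scheme-theoretic support of $Q_E$). Since the paper is silent on the matter, set-theoretic containment may well be all that is intended, and under that reading your proof is already complete and more detailed than the original.
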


On the other hand, it is easy to check that every rank 0 instanton sheaf is semistable.

\begin{lemma}\label{rk 0 semistable}
Every rank 0 instanton sheaf is semistable.
\end{lemma}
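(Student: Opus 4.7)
The plan is to verify Gieseker semistability directly from the two defining cohomological vanishings, in two steps: first establishing purity of $Q$, and then bounding the reduced Hilbert polynomials of all proper nonzero subsheaves.

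For purity, I would argue that the maximal $0$-dimensional subsheaf $T \subset Q$ must vanish. Such a $T$ is supported on finitely many points, so $T(-2) \simeq T$ and therefore $h^0(T(-2)) = \lt(T)$. The inclusion $T \hookrightarrow Q$ together with $h^0(Q(-2)) = 0$ forces $T = 0$, and since $P_Q(t) = d(t+2)$ has degree one, this shows that $Q$ is pure of dimension $1$ and multiplicity $d$, with reduced Hilbert polynomial $p_Q(t) = t + 2$.

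For the semistability inequality itself, I would take a proper nonzero subsheaf $Q' \subset Q$. Purity of $Q$ forces $Q'$ to be $1$-dimensional, say with Hilbert polynomial $P_{Q'}(t) = d' t + c'$ and $d' \geq 1$. The key step is to apply global sections to the twisted inclusion $Q'(-2) \hookrightarrow Q(-2)$: since $h^0(Q(-2)) = 0$ and higher cohomology of a $1$-dimensional coherent sheaf on $\p3$ vanishes beyond degree one, we get
\[
\chi(Q'(-2)) \;=\; -h^1(Q'(-2)) \;\leq\; 0.
\]
Evaluating $P_{Q'}$ at $t = -2$ then gives $c' \leq 2d'$, equivalently $p_{Q'}(t) = t + c'/d' \leq t + 2 = p_Q(t)$, which is exactly the semistability condition.

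No step is a serious obstacle; the only subtlety is ensuring purity so that any proper subsheaf inherits a degree-one Hilbert polynomial and its reduced Hilbert polynomial is well defined. In particular, the explicit resolution from Proposition \ref{generic_case} plays no role, and the argument uses only the two vanishings $h^0(Q(-2)) = h^1(Q(-2)) = 0$ (in fact only the first of them, together with knowledge of $P_Q$).
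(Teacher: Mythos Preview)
Your proof is correct and follows essentially the same route as the paper's: both deduce $\chi(T(-2))\le 0$ for any nonzero subsheaf $T\subset Q$ from the vanishing $h^0(Q(-2))=0$, which immediately gives the semistability inequality. Your version is slightly more careful in isolating the purity step, which the paper's one-paragraph argument leaves implicit.
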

\begin{proof}
Let $Z$ be a rank 0 instanton sheaf, and let $T$ be a subsheaf of $Z$ with Hilbert polynomial $P_Z(t)=a\cdot t+\chi(Z)$. Since $h^0(Z(-2))=0$, then $h^0(T(-2))=0$ and $-2a+\chi(Z)=-h^1(T(-2))\leq0$. It follows that 
\[
\frac{\chi(Z)}{a} \leq 2 = \frac{\chi(Q)}{d} . \qedhere
\]
\end{proof}

Clearly, not every rank 0 instanton sheaf is stable: if $Q_1$ and $Q_2$ are rank 0 instanton sheaves, then so is any extention of $Q_1$ by $Q_2$, and this cannot possibly be stable.

Conversely, there are semistable sheaves with Hilbert polynomial $dt+2d$ which are not rank 0 instanton sheaves: just consider $Q:=\calo_\Sigma(2)$ for an elliptic curve $\Sigma\hookrightarrow\p3$, so that $h^0(Q(-2))\ne0$.


\section{Moduli space of instanton sheaves} \label{L(n)}

Let $\call(n)$ denote the open subscheme of the Maruyama moduli space $\calm(n)$ of semistable rank 2 torsion free sheaves with Chern classes $c_1=0$, $c_2=n$ and $c_3=0$ consisting of instanton sheaves of charge $n$. Let also $\cali(n)$ denote the open subscheme of $\calm(n)$ consisting of locally free instanton sheaves. Finally, let $\overline{\call(n)}$ and $\overline{\cali(n)}$ denote the closures within $\calm(n)$ of $\call(n)$ and $\cali(n)$, respectively. We also consider the set $\cali^{0}(n):=\overline{\cali(n)}\cap \call(n)$, which consists of those instanton sheaves which either are locally free, or can be deformed into locally free ones.

It was shown in \cite{T1,T2} that $\cali(n)$ is irreducible for every $n>0$; its closure $\overline{\cali(n)}$ is called the \emph{instanton component} of $\calm(n)$. However, the same is not true for $\call(n)$ as soon as $n\ge 3$. Indeed, it is well known that
$$ \overline{\cali(1)}=\call(1)=\calm(1)\simeq \p5 ,$$
see for instance \cite[Section 6]{JMT2}.

The case $n=2$ has also been understood.

\begin{proposition}\label{L(2)}
$\overline{\call(2)} = \overline{\cali(2)}$. 
\end{proposition}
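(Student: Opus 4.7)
The plan is to prove the nontrivial containment $\overline{\call(2)} \subseteq \overline{\cali(2)}$, as the reverse inclusion follows from $\cali(2) \subseteq \call(2)$. Concretely, I would show that every non-locally-free $E \in \call(2)$ is a flat limit of locally free charge-$2$ instanton bundles.

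First I would stratify $\call(2) \setminus \cali(2)$ using Theorem \ref{jg-thm} and Remark \ref{remark 4}: for such $E$, the double dual $E^{\vee\vee}$ is a locally free instanton of charge $n' \in \{0,1\}$ and $Q_E = E^{\vee\vee}/E$ is a rank-$0$ instanton of multiplicity $d = 2 - n' \in \{1,2\}$. In the case $d=1$, Proposition \ref{generic_case} forces $Q_E \cong \calo_\ell(1)$ for a line $\ell \subset \p3$ (the only sheaf of Hilbert polynomial $t+2$ admitting that resolution), and $E^{\vee\vee}$ is a nullcorrelation bundle $F$, so $E = \ker(F \onto \calo_\ell(1))$. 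In the case $d=2$, the Corollary above presents $E$ as an extension $0 \to \IC \to E \to \ID \to 0$ with $C \subset \p3$ pure of dimension $1$ and degree $2$ (i.e.\ a smooth conic, a pair of lines, or a double line) and $D$ zero-dimensional.

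I would then produce the deformation at the level of monads, using that every instanton sheaf of charge $n$ is the cohomology of a linear monad of the form (\ref{instanton monad}). For $n=2$ this gives, for each $E$, a presentation $0 \to 2\op3(-1) \xrightarrow{\alpha} 6\op3 \xrightarrow{\beta} 2\op3(1) \to 0$ whose cohomology is locally free iff $\alpha$ is fiberwise injective and $\beta$ fiberwise surjective. In each of the two strata above I would write down an explicit such presentation of $E$ and then construct a perturbation $(\alpha + t\alpha', \beta + t\beta')$ still satisfying $(\beta + t\beta')(\alpha + t\alpha') = 0$ and having both maps fiberwise full rank for $t \neq 0$. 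The resulting family of cohomologies is a flat family of charge-$2$ instanton sheaves with $E_0 \cong E$ and $E_t \in \cali(2)$ for $t \neq 0$ (the instanton conditions $h^1(E_t(-2)) = h^2(E_t(-2)) = 0$ persisting by upper semicontinuity), placing $E$ in $\overline{\cali(2)}$.

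The main obstacle is the explicit monad computation: one must exhibit, in each stratum, a perturbation direction $(\alpha', \beta')$ tangent to the monad variety at the given point that moves the cohomology off the non-locally-free locus. For $d=2$ the perturbation must turn $\alpha$ into a subbundle, and for $d=1$ it must eliminate the degeneracy of $\beta$ along $\ell$; both are manageable via direct $\Ext$-computations once the nullcorrelation bundle $F$ and the degree-$2$ curve $C$ are placed in standard form (e.g.\ $C$ a smooth conic or two disjoint lines). Equivalently, one can avoid the monad step and instead verify, stratum by stratum, that the parameter counts for the defining data give strata of dimension strictly less than $\dim \cali(2) = 13$ while each admits an explicit smoothing to $\cali(2)$, which yields the same conclusion.
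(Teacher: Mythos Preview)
Your approach is genuinely different from the paper's. The paper's proof is essentially a citation: Le Potier classified the irreducible components of $\calm(2)$ (there are exactly three), and \cite[Section~6]{JMT2} describes them well enough to check that only the instanton component $\overline{\cali(2)}$ contains instanton sheaves. No deformation is constructed; the result is read off from a known component list.

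Your direct deformation strategy is closer in spirit to what the paper does later for $\call(3)$ and $\call(4)$ in Sections~\ref{L(3) section} and~\ref{L(4) section}. There, however, the mechanism is not monad perturbation but rather: first deform the rank~$0$ quotient $Q_E$ within $\calt(d)$ to a generic representative (using the component classification of $\calt(d)$), then lift the deformation of the quotient to a deformation of $E$ via Lemma~\ref{F,G}, and finally invoke \cite[Theorem~7.8]{JMT1} or Proposition~\ref{disjoint rat curves} to land in $\cali^0$. If you want a self-contained argument for $n=2$, that route is cleaner than hand-building monad perturbations: for $d_E=1$ you are exactly in the situation of \cite[Proposition~7.2]{JMT1}, and for $d_E=2$ you would use Proposition~\ref{components_2} to deform $Q_E$ to either $\calo_\Gamma(3\pt)$ or $\calo_{\ell_1}(1)\oplus\calo_{\ell_2}(1)$ and then apply the same machinery. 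Your monad-perturbation sketch is plausible but leaves the hardest step (finding $(\alpha',\beta')$ with $\beta'\alpha+\beta\alpha'=0$ that breaks the degeneracy, uniformly over each stratum including the non-generic curves such as double lines) entirely unverified.

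One genuine gap: your closing alternative, ``verify that the strata have dimension strictly less than $13$ while each admits an explicit smoothing,'' is circular as stated. The dimension bound alone buys nothing---a lower-dimensional locus can perfectly well be an irreducible component of $\call(2)$---so the entire content still rests on the ``explicit smoothing,'' which is exactly the step you have not carried out.
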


In particular, $\call(2)$ possesses a single irreducible component of dimension 13.

\begin{proof}
Le Potier showed in \cite{LeP} that $\calm(2)$ has exactly 3 irreducible components; according to the description of these components provided in \cite[Section 6]{JMT2}, only the instanton component $\overline{\cali(2)}$ contains instanton sheaves. 
\end{proof}

Let us now describe the irreducible components of $\call(n)$ for $n\ge 3$ introduced in \cite[Section 3]{JMT2}.

Let $\Sigma$ be an irreducible, nonsingular, complete intersection curve in $\p3$, given as the intersection of a surface of degree $d_1$ with a surface of degree $d_2$, with $1\le d_1\le d_2$; denote by $\iota:\Sigma\hookrightarrow\p3$ the inclusion morphism. Let also $L\in\Pic^{g-1}(\Sigma)$ such that $h^0(\Sigma,L)=h^1(\Sigma,L)=0$. Given a (possibly trivial) locally free instanton sheaf $F$ of charge $c\ge0$ and an epimorphism $\varphi:E\onto(\iota_*L)(2)$, the kernel $F:=\ker\varphi$ is an instanton sheaf of charge $c+d_1d_2$. Thus we may consider the set
\begin{equation} \label{c-comp's}
\calc(d_1,d_2,c) := \left\{ [E]\in\calm(c+d_1d_2) ~|~ E^{\vee\vee}\in\cali(c) ~,~
E^{\vee\vee}/E \simeq (\iota_*L)(2) \right\}
\end{equation}
as a subvariety of $\calm(c+d_1d_2)$. The following result is proved in \cite{JMT2}, cf. Theorems 15, 17 and 23.

\begin{theorem}\label{dim C thm}
For each $c\ge0$ and $1\le d_1\le d_2$ such that $(d_1,d_2)\ne(1,1),(1,2)$, $\overline{\calc(d_1,d_2,c)}$ is an irreducible component of $\calm(c+d_1d_2)$ of dimension
\begin{equation} \label{dim C}
\dim \overline{\calc(d_1,d_2,c)} = 8c-3 + \frac{1}{2}d_1d_2(d_1+d_2+4) + h,
\end{equation}
where
$$ h = \left\{ \begin{array}{l}
2{{d_1+3}\choose{3}} - 4 , ~~{\rm if}~~ d_1=d_2 \\ ~~ \\
{{d_1+3}\choose{3}} + {{d_2+3}\choose{3}} - {{d_2-d_1+3}\choose{3}} - 2, ~~{\rm if}~~ d_1<d_2
\end{array} \right. $$
\medskip
In addition, $\overline{\calc(d_1,d_2,c)}\cap\cali^{0}(c+d_1d_2)\ne
\emptyset$.
\end{theorem}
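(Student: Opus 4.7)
My plan is to realize $\calc(d_1,d_2,c)$ as the image of an irreducible parameter space under a natural classifying morphism. Specifically, let $\calp$ be the scheme parameterizing tuples $(F,\Sigma,L,[\varphi])$, where $F \in \cali(c)$, $\Sigma \subset \p3$ is a smooth complete intersection curve of type $(d_1,d_2)$, $L \in \Pic^{g-1}(\Sigma)$ lies in the open locus $\{h^0(L)=h^1(L)=0\}$, and $[\varphi]$ is the projective class of a surjection $\varphi: F \twoheadrightarrow (\iota_*L)(2)$. Each of these strata is irreducible (for $\cali(c)$ this uses the results of \cite{T1,T2}; for the Hilbert scheme of CI curves this is a standard projective-bundle argument; the Picard variety is a torus; the space of surjections is a nonempty open subset of a vector space), and they fit into an iterated fibration, so $\calp$ is irreducible. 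The assignment $(F,\Sigma,L,[\varphi]) \mapsto [\ker \varphi]$ lands in $\calm(c+d_1d_2)$ by Theorem \ref{jg-thm} and Remark \ref{remark 4}, and by construction has image exactly $\calc(d_1,d_2,c)$, so the latter is irreducible. That the fibers are finite (in fact generically trivial) follows because an instanton sheaf $E \in \calc(d_1,d_2,c)$ recovers $F = E^{\vee\vee}$ and $(\iota_*L)(2) = E^{\vee\vee}/E$, hence $\Sigma$ and $L$, uniquely up to the $\mathbb{G}_m$-action already absorbed into $[\varphi]$.

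The dimension formula then assembles from four contributions: $\dim \cali(c) = 8c-3$; $\dim$ of the Hilbert scheme of complete intersection curves of type $(d_1,d_2)$, which equals $h$ (a direct count using either a Grassmannian of pencils when $d_1=d_2$, or the pair $(F_1,F_2)$ modulo the $F_2 \mapsto F_2 + GF_1$ action when $d_1<d_2$); $\dim \Pic^{g-1}(\Sigma) = g$; and $\dim \PP\Hom(F,(\iota_*L)(2)) = 4d_1d_2 - 1$, which follows from $\Hom(F,(\iota_*L)(2)) = H^0(\Sigma, L(2) \otimes F^\vee|_\Sigma)$ together with Riemann--Roch using $\deg L(2) = g-1+2d_1d_2$ on $\Sigma$ and the vanishing of $H^1$. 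Adding and using $g-1 = \tfrac12 d_1d_2(d_1+d_2-4)$ gives
\[
(8c-3) + h + g + (4d_1d_2-1) = 8c-3 + h + \tfrac12 d_1d_2(d_1+d_2+4),
\]
matching the claimed dimension.

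To upgrade $\calc(d_1,d_2,c)$ from a constructible subset to an irreducible component, I would show that at a generic $E$ one has $\ext^1(E,E)$ equal to the dimension computed above (so that the Zariski tangent space to $\calm$ at $[E]$ has exactly the expected size). The idea is to apply $\Hom(-,E)$ and $\Hom(E,-)$ to the sequence (\ref{std dual sqc}) with $Q_E = (\iota_*L)(2)$, and analyze the resulting long exact sequences using that $\ext^i(F,F)$ is known for a generic instanton bundle, that $F|_\Sigma$ is a rank $2$ bundle on a smooth curve (so all its higher $\Ext$'s vanish), and that $(\iota_*L)(2)$ is Cohen--Macaulay of codimension $2$ so its local Ext-sheaves are computable via Serre duality on $\Sigma$. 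The main obstacle is precisely this cohomological bookkeeping: because $E$ is merely torsion free, one does not get to apply reflexive-sheaf simplifications, and one must carefully track the connecting maps involving $\Ext^i(F,(\iota_*L)(2))$ and $\Ext^i((\iota_*L)(2),F)$; it is also here that the excluded cases $(1,1)$ and $(1,2)$ intervene, because in those ranges $\Sigma$ has genus $0$ and the parameter-space count coincides with the dimension of the instanton component $\overline{\cali(c+d_1d_2)}$, so no new component is produced.

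Finally, the non-emptiness $\overline{\calc(d_1,d_2,c)} \cap \cali^0(c+d_1d_2) \neq \emptyset$ is a smoothing statement: starting from $E \in \calc(d_1,d_2,c)$, I would build a flat deformation of $E$ whose generic fiber is a locally free instanton sheaf of charge $c+d_1d_2$ by deforming the extension class in $\Ext^1((\iota_*L)(2),F)$. Equivalently, one exhibits a curve in $\calm(c+d_1d_2)$ joining $[E]$ to a point of $\cali(c+d_1d_2)$; concretely, this can be done by degenerating $\Sigma$ (and correspondingly $F$) in a family so that the quotient $(\iota_*L)(2)$ gets absorbed and the total sheaf becomes locally free, using the openness of $\cali$ inside $\call$ to conclude.
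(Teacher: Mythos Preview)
The paper does not prove this theorem: it is quoted from \cite{JMT2} (see the sentence immediately preceding the statement, ``The following result is proved in \cite{JMT2}, cf.\ Theorems 15, 17 and 23''). So there is no in-paper proof to compare against; I will instead comment on the soundness of your sketch.

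Your parameter-space construction and dimension count are correct and are the natural approach. The identification of $h$ with the dimension of the Hilbert scheme of smooth complete intersections of type $(d_1,d_2)$ is right in both cases, the genus formula $g-1=\tfrac12 d_1d_2(d_1+d_2-4)$ is the adjunction computation, and the Riemann--Roch step giving $\dim\Hom(F,(\iota_*L)(2))=4d_1d_2$ is fine for generic data (you need $H^1(F^\vee|_\Sigma\otimes L(2))=0$, which holds on a nonempty open set). The fiber analysis is also correct: $E$ determines $F=E^{\vee\vee}$, $Q_E=(\iota_*L)(2)$, hence $\Sigma$ and $L$, and the quotient map is the unique $\varphi$ up to the scalars already absorbed.

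The genuine gaps are exactly the two places you flag as ``I would show''. First, producing an irreducible locus of the right dimension does \emph{not} make it a component; you must rule out that $\calc(d_1,d_2,c)$ sits inside a larger family, and the only way to do this is the tangent-space computation $\ext^1(E,E)=\dim\calc(d_1,d_2,c)$ at a generic point. You outline the strategy (apply $\Hom$ to the standard sequence and use the Cohen--Macaulay structure of $(\iota_*L)(2)$), but this is precisely the substantive calculation, and it is where the hypothesis $(d_1,d_2)\neq(1,1),(1,2)$ enters nontrivially --- not merely as a dimension coincidence, but because in those cases the family is genuinely absorbed into $\overline{\cali(c+d_1d_2)}$. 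Second, the intersection with $\cali^0$ is a smoothing statement that requires an actual construction: your suggestion to ``deform the extension class'' is too vague, since a generic deformation of $E$ inside $\calc(d_1,d_2,c)$ stays non--locally-free by definition. What is needed is a one-parameter family leaving $\calc(d_1,d_2,c)$ whose general member is locally free, and this is a separate argument (in \cite{JMT2} it is their Theorem 23). As written, your proposal establishes irreducibility and the dimension formula, but not the two headline claims of the theorem.
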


We do not know whether the families $\calc(d_1,d_2,c)$ exhaust all components of $\call(n)$, though we prove that this holds for $n=3,4$ in Sections \ref{L(3) section} and \ref{L(4) section} below, respectively.

However, we remark that the previous result allows for a partial count of the number of components of $\call(n)$. Indeed, let $\tau(n)$ denote the number of irreducible components of the union 
$$ \overline{\cali(n)} \bigcup \left( \bigcup_{d_1d_2+c=n} \overline{\calc(d_1,d_2,c)} \right). $$
To estimate $\tau(n)$, we must count the different ways in which an integer $n\ge3$ can be written as
$n=d_1d_2+c$ with $c\ge0$, and $1\le d_1\le d_2$ excluding the pairs $(d_1,d_2)=(1,1),(1,2)$. Consider the function
$$ \delta(p) = \left\{ \begin{array}{l}
\frac{1}{2}  ( d(p) + 1 ), \mbox{ if } p \mbox{ is a perfect square} \\ ~~ \\
\frac{1}{2}  d(p), \mbox{ otherwise}
\end{array} \right. $$ 
where $d(p)$ is the \emph{divisor function}, i.e. the number of divisors of a positive integer $p$, including $p$ itself. Note that $\delta(p)$ is the number of different ways in which we can write $p$ as a product $d_1d_2$ with $1\le d_1\le d_2$. Adding the instanton component, we have that the number of irreducible components of $\call_0(n)$ is given by: 
\begin{equation} \label{ell0}
\tau(n) = 1 + \sum_{p=3}^{n} \delta(p) = 
\frac{1}{2}\left( \sum_{p=3}^{n} d(p) + \left\lfloor \sqrt{n} \right\rfloor + 1 \right),
\end{equation}
since $\left\lfloor\sqrt{n}\right\rfloor - 1$ accounts for the number of perfect squares between 3 and $n$.

\begin{lemma}
Let $l(n)$ be the number of irreducible components of the moduli space of instanton sheaves of charge $n$. Then, for $n$ sufficiently large $l(n)> \frac{1}{2} n\cdot\log(n)$. 
\end{lemma}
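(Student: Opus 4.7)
The plan is to combine the explicit lower bound $l(n) \ge \tau(n)$ coming from Theorem \ref{dim C thm} with Dirichlet's classical asymptotic for the divisor summatory function.

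First I would observe that by Theorem \ref{dim C thm}, each triple $(d_1,d_2,c)$ with $d_1d_2+c=n$, $1\le d_1\le d_2$, and $(d_1,d_2)\ne (1,1),(1,2)$ contributes the irreducible component $\overline{\calc(d_1,d_2,c)}$ of $\call(n)$, distinct from the instanton component $\overline{\cali(n)}$ (whose generic member is locally free, hence has $E^{\vee\vee}/E=0$, ruling out $d_1d_2\ge 3$). Moreover, for a generic $[E]\in\calc(d_1,d_2,c)$ the charge $c$ of $E^{\vee\vee}$ and the bidegree of the curve $\mathrm{supp}(E^{\vee\vee}/E)$ are invariants, so distinct triples give genuinely distinct components. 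Therefore $l(n)\ge \tau(n)$, and by formula (\ref{ell0}),
\begin{equation*}
l(n) \;\ge\; \tau(n) \;=\; \frac{1}{2}\Big(\textstyle\sum_{p=3}^{n} d(p) + \lfloor\sqrt{n}\rfloor + 1\Big).
\end{equation*}

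Next I would invoke Dirichlet's divisor theorem, which states
\begin{equation*}
\sum_{p=1}^{n} d(p) \;=\; n\log n + (2\gamma-1)n + O(\sqrt{n}),
\end{equation*}
where $\gamma$ is the Euler--Mascheroni constant. Since $2\gamma-1>0$, subtracting the bounded contribution $d(1)+d(2)=3$ and plugging into the expression for $\tau(n)$ yields
\begin{equation*}
2\tau(n) \;=\; n\log n + (2\gamma-1)n + O(\sqrt{n}),
\end{equation*}
so for all sufficiently large $n$ the positive linear term dominates the error, giving $2\tau(n) > n\log n$, i.e.\ $\tau(n) > \frac{1}{2} n\log n$. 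Combined with $l(n)\ge\tau(n)$, this proves the claim.

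There is no real obstacle; the only subtle point is justifying that the components $\overline{\calc(d_1,d_2,c)}$ indexed by distinct triples are pairwise distinct. This is immediate from the fact that both $c=c_2(E^{\vee\vee})$ and the bidegree $(d_1,d_2)$ of the complete intersection curve $\mathrm{supp}(E^{\vee\vee}/E)$ are constant on the family and thus constant on its irreducible component. Everything else reduces to citing Dirichlet's asymptotic.
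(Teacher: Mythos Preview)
Your proof is correct and follows essentially the same route as the paper: both bound $l(n)$ below by $\tau(n)$ via formula (\ref{ell0}) and then apply Dirichlet's asymptotic $\sum_{p\le n} d(p)=n\log n+(2\gamma-1)n+O(n^\theta)$ (the paper cites Huxley's sharper exponent $\theta\le 131/416$, while you use the classical $\theta=1/2$, but either suffices). Your added justification that distinct triples $(d_1,d_2,c)$ give distinct components---via the invariants $c_2(E^{\vee\vee})$ and the degree/genus of the support of $E^{\vee\vee}/E$---is a detail the paper leaves implicit in the derivation of (\ref{ell0}).
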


\begin{proof}
Determining the asymptotic behaviour of the sum of divisors function is a relevant problem in Number Theory called the \emph{Dirichlet divisor problem}; indeed, it is known that
$$  \sum_{p=1}^{n} d(p) = n\cdot\log(n) + (2\gamma - 1)n + O(n^\theta), $$
where $\gamma$ denotes the Euler--Mascheroni constant, and $1/4\le \theta\le 131/416$, cf. \cite{Huxley}. Comparing with equation (\ref{ell0}), we easily obtain the desired estimate.
\end{proof}

\bigskip

Also relevant for us is a class of instanton sheaves studied in \cite{JMT1}; more precisely, for $n>0$ and each $m=1,\dots,n$, consider the subset $\cald(m,n)$ of $\calm(n)$ consisting of the isomorphism classes $[E]$ of the sheaves $E$ obtained in this way:
$$ \cald(m,n) :=
\{ [E]\in\calm(n) ~|~ [E^{\vee\vee}]\in\cali(n-m), ~~
\Gamma=\mathrm{Supp}(E^{\vee\vee}/E)\in \calr^*_{0}(m)_{E^{\vee\vee}}~,
$$
$$
~~{\rm and}~~
E^{\vee\vee}/E\simeq\calo_\Gamma(2m-1) \},
$$
where the space $\calr^*_{0}(m)_{E^{\dual\dual}}$ is decribed as follows: first, let $\calr^*_{0}(m)$ denote the space of nonsingular rational curves $\Gamma\hookrightarrow\p3$ of degree $m$ whose normal bundle  $N_{\Gamma/\p3}$ is given by $2\cdot\calo_\Gamma((2m-1)\pt)$; then, for any instanton sheaf $F$ we set
$$ \calr^*_{0}(m)_F := \{\:\Gamma\in \calr^*_{0}(m)~|~ F|_\Gamma\simeq2\cdot\calo_\Gamma\ \}. $$
One can show that for every rank 2 instanton sheaf $F$, the space $\calr^*_{0}(m)_F$ is a nonempty open subset of $\calr^*_{0}(m)$, cf. \cite[Lemma 6.2]{JMT1}.

Let $\overline{\cald(m,n)}$ denote the closure of $\cald(m,n)$ within $\calm(n)$. Note that since $E^{\vee\vee}$ is a locally free instanton sheaf of charge $n-m$, and $\calo_\Gamma(2m-1)$ is a rank 0 instanton sheaf of degree $m$, then $E$ is an instanton sheaf of charge $n$, so that $\cald(m,n)\subset\call(n)$. In fact, it is shown in \cite[Theorem 7.8]{JMT2} that $\overline{\cald(m,n)}\subset\cali^0(n)$. In addition, we prove:

\begin{proposition} \label{disjoint rat curves}
Let $\Gamma_1,\dots,\Gamma_r$ be disjoint, smooth irreducible  rational curves in $\p3$ of degrees $m_1,\dots,m_r$, respectively; set $Q:=\bigoplus_{j=1}^{r}\calo_{\Gamma_j}(-\pt)$. If $F$ is a locally free instanton sheaf of charge $c$ such that $F|_{\Gamma_j}\simeq2\cdot\calo_{\Gamma_j}$ for each $j=1,\dots,r$, and $\varphi:F\onto Q(2)$ is an epimorphism, then $[\ker\varphi]\in\cali^0(c+m_1+\cdots+m_r)$. 
\end{proposition}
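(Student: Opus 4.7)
The plan is to first verify that $E:=\ker\varphi$ satisfies the cohomological conditions defining an instanton sheaf of charge $n:=c+m_1+\cdots+m_r$, and then to establish $[E]\in\overline{\cali(n)}$ by induction on $r$, taking the single-curve case \cite[Theorem 7.8]{JMT2} as the base.

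For the first step, the disjointness of the $\Gamma_j$ forces $Q(2)=\bigoplus_j\calo_{\Gamma_j}((2m_j-1)\pt)$ and a decomposition $\varphi=(\varphi_j)$ with each $\varphi_j\colon F\onto\calo_{\Gamma_j}((2m_j-1)\pt)$ surjective. The kernel $E$ is torsion-free of rank $2$ with $c_1(E)=0$, $c_2(E)=c+\sum_j m_j$ and $c_3(E)=0$. Twisting $0\to E\to F\to Q(2)\to 0$ by $\calo_{\p3}(\ell)$ for $\ell\in\{-1,-2,-3\}$ and combining the instanton vanishings for $F$ with the elementary computations $h^0(Q)=h^0(Q(1))=h^1(Q)=h^2(Q(-1))=0$ (each summand being a degree $-1$ line bundle on $\Gamma_j\simeq\p1$) yields $h^0(E(-1))=h^1(E(-2))=h^2(E(-2))=h^3(E(-3))=0$. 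Hence $[E]\in\call(n)$.

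For the inductive step ($r\ge 2$), set
$$E'':=\ker\Bigl(F\onto\bigoplus\nolimits_{j=2}^r\calo_{\Gamma_j}((2m_j-1)\pt)\Bigr),$$
an instanton sheaf of charge $c+m_2+\cdots+m_r$ by the first step. The inductive hypothesis applied to the $r-1$ disjoint curves $\Gamma_2,\ldots,\Gamma_r$ produces a $1$-parameter flat family $\{F''_s\}_{s\in T}$ with $F''_0=E''$ and $F''_s$ a locally free instanton sheaf of charge $c+m_2+\cdots+m_r$ for $s\neq 0$. Since $\Gamma_1$ is disjoint from $\supp(F/E'')=\Gamma_2\cup\cdots\cup\Gamma_r$, the inclusion $E''\hookrightarrow F$ is an isomorphism in a neighborhood of $\Gamma_1$, so $\varphi_1$ restricts to a surjection $\varphi_1|_{E''}\colon E''\onto\calo_{\Gamma_1}((2m_1-1)\pt)$ with $E=\ker(\varphi_1|_{E''})$. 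The crux will then be to extend $\varphi_1|_{E''}$ to a family of surjections $\tilde\varphi_{1,s}\colon F''_s\onto\calo_{\Gamma_1}((2m_1-1)\pt)$.

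This extension, which I expect to be the main technical obstacle, will be handled as follows. In the neighborhood of $\Gamma_1$ just mentioned, $E''=F$ is locally free with $E''|_{\Gamma_1}\simeq 2\cdot\calo_{\Gamma_1}$; by upper-semicontinuity of the splitting type of rank-$2$ bundles on $\Gamma_1\simeq\p1$ in families, we may shrink $T$ so that $F''_s|_{\Gamma_1}\simeq 2\cdot\calo_{\Gamma_1}$ for all $s\in T$. Then $\dim\Hom(F''_s,\calo_{\Gamma_1}((2m_1-1)\pt))=4m_1$ is constant in $s$, so by Grauert's theorem the relative $\Hom$-sheaf is locally free on $T$ and $\varphi_1|_{E''}$ extends to the desired $\tilde\varphi_{1,s}$. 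Setting $E_s:=\ker(\tilde\varphi_{1,s})$ yields a flat family with $E_0=E$. For $s\neq 0$, applying the base case to $(F''_s,\Gamma_1,\tilde\varphi_{1,s})$ gives $[E_s]\in\cali^0(n)\subset\overline{\cali(n)}$, and passing to the limit $s\to 0$ together with the closedness of $\overline{\cali(n)}$ in $\calm(n)$ yields $[E]\in\overline{\cali(n)}$, hence $[E]\in\cali^0(n)$, completing the induction.
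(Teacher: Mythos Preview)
Your argument is correct and follows essentially the same approach as the paper's proof: induction on $r$ with base case \cite[Theorem 7.8]{JMT2}, peeling off one curve, deforming the partial kernel to locally free instantons via the inductive hypothesis, and then extending the remaining surjection across the family. The only cosmetic differences are that the paper packages the extension step into a preparatory lemma (Lemma~\ref{F,G}, which requires the vanishing of $H^i(\inhom(F''_s,\calo_{\Gamma_1}((2m_1-1)\pt)))$ for $i\ge1$) rather than invoking Grauert's theorem directly, and the paper phrases the conclusion as $[E_s]\in\cald(m_1,n)$ for $s\neq0$ before appealing to \cite[Theorem 7.8]{JMT2} once more. Two small points worth making explicit in your write-up: that the extended $\tilde\varphi_{1,s}$ remain surjective after shrinking $T$ (surjectivity being open), and that the family $\mathbf{F}''$ is locally free along $\Gamma_1\times T$ near $s=0$ (so that the restriction to $\Gamma_1$ is a genuine family of bundles to which the splitting-type semicontinuity applies).
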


The proof of the previous proposition requires the following technical lemma, proved in \cite[Lemma 7.1]{JMT1}.

\begin{lemma}\label{F,G}
Let $C$ be a smooth irreducible curve with a marked point $0$, and set $\mathbf{B}:=C \times\p3$. Let $\mathbf{F}$ and $\mathbf{G}$ be $\mathcal{O}_{\mathbf{B}}$-sheaves, flat over $C $ and such that $\mathbf{F}$ is locally free along $\mathrm{Supp}(\mathbf{G})$. Denote
$$ \mathbf{G}_t:=\mathbf{G}|_{\{t\}\times\p3} ~~{\rm and}~~
\mathbf{F}_t=\mathbf{F}|_{\{t\}\times\p3}\ \ {\rm for}\ t\in C . $$
Assume that, for each $t\in C$,
\begin{equation}\label{vanish Hi}
H^i(\inhom(\mathbf{F}_t,\mathbf{G}_t))=0,\ \ \ i\ge1.
\end{equation}
If $s:\mathbf{F}_0\to\mathbf{G}_0$ is an epimorphism, then, after possibly shrinking $C$,
$s$~extends to an epimorhism $\mathbf{s}:\mathbf{F}\twoheadrightarrow\mathbf{G}$.
\end{lemma}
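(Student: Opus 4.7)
The plan is to first verify that $E := \ker\varphi$ is an instanton sheaf of charge $n := c + m_1 + \cdots + m_r$, and then to deform $E$ to a locally free instanton by an inductive application of Lemma \ref{F,G}.

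For the instanton verification, the key observation is that $Q(2) \simeq \bigoplus_{j=1}^{r} \calo_{\Gamma_j}(2m_j - 1)$ is a rank $0$ instanton sheaf of multiplicity $m := m_1 + \cdots + m_r$, since each summand $\calo_{\Gamma_j}(-\pt) \simeq \calo_{\p1}(-1)$ has vanishing cohomology. Chern class additivity in $0 \to E \to F \to Q(2) \to 0$ yields $c_1(E)=0$, $c_2(E)=c+m$, $c_3(E)=0$. The four cohomological vanishings $h^0(E(-1))=h^1(E(-2))=h^2(E(-2))=h^3(E(-3))=0$ will follow by twisting the sequence successively by $\calo(-1)$, $\calo(-2)$, $\calo(-3)$ and combining the instanton vanishings for $F$ with the elementary cohomology of $\calo_{\p1}(k)$ on each $\Gamma_j \simeq \p1$.

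For the deformation, I would proceed by induction on $r$. The case $r=0$ is trivial. For the inductive step, use the disjointness of the $\Gamma_j$ to split $Q(2) = Q'(2) \oplus \calo_{\Gamma_r}(2m_r-1)$ with $Q' := \bigoplus_{j<r}\calo_{\Gamma_j}(-\pt)$, and correspondingly $\varphi = (\varphi',\varphi_r)$. Set $E' := \ker\varphi'$; by the inductive hypothesis $[E'] \in \cali^0$, giving a smooth pointed curve $(C,0)$ and a flat family $\mathbf{E}'$ on $C \times \p3$ with $\mathbf{E}'_0 \simeq E'$ and $\mathbf{E}'_t$ a locally free instanton for $t \neq 0$. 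Disjointness forces $E'|_{\Gamma_r} \simeq F|_{\Gamma_r} \simeq 2\calo_{\Gamma_r}$, and, after possibly shrinking $C$, this trivialisation extends to $\mathbf{E}'_t|_{\Gamma_r} \simeq 2\calo_{\Gamma_r}$ for every $t \in C$. Hence $\inhom(\mathbf{E}'_t, \calo_{\Gamma_r}(2m_r-1)) \simeq 2\calo_{\Gamma_r}(2m_r-1)$ has vanishing higher cohomology, so Lemma \ref{F,G} (applied to $\mathbf{E}'$ and the constant family $p^*\calo_{\Gamma_r}(2m_r-1)$) extends $\varphi_r$ to an epimorphism $\mathbf{s}: \mathbf{E}' \twoheadrightarrow p^*\calo_{\Gamma_r}(2m_r-1)$, whose kernel $\mathbf{E} := \ker\mathbf{s}$ is flat over $C$ with $\mathbf{E}_0 = E$.

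For $t \neq 0$, the fibre $\mathbf{E}_t$ is the kernel of an epimorphism from the locally free instanton $\mathbf{E}'_t$ onto $\calo_{\Gamma_r}(2m_r-1)$, placing it in the $r=1$ case. When $\Gamma_r \in \calr^*_0(m_r)$, this is a $\cald(m_r,\cdot)$-situation and \cite[Theorem 7.8]{JMT2} gives $\mathbf{E}_t \in \cali^0$, whence $E = \mathbf{E}_0 \in \overline{\cali(n)} \cap \call(n) = \cali^0(n)$. The main obstacle will be the $r=1$ case for a general smooth rational curve $\Gamma_r$ whose normal bundle need not be balanced --- note that $\calr^*_0(m)$ is even empty for $m=2$, since smooth plane conics have normal bundle $\calo_{\p1}(4) \oplus \calo_{\p1}(2)$. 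To handle this, I would deform $\calo_{\Gamma_r}(2m_r-1)$ within the moduli of rank $0$ instantons of multiplicity $m_r$ into a direct sum $\bigoplus_{k=1}^{m_r} \calo_{\ell_k}(1)$ over $m_r$ disjoint lines (taking advantage of the description of $\calt(m_r)$ given in Main Theorem \ref{mthm2}), apply Lemma \ref{F,G} once more to produce a family of kernels, and then iterate the line-by-line reduction to the $\cald(1,\cdot) \subset \cali^0$ case.
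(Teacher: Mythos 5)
Your proposal does not address the statement you were asked to prove. The statement is Lemma \ref{F,G}, a relative extension result for flat families over a curve: given the fibrewise cohomological vanishing (\ref{vanish Hi}), an epimorphism on the central fibre extends, after shrinking the base, to an epimorphism of families. What you have written is instead a proof sketch of Proposition \ref{disjoint rat curves} (kernels of epimorphisms onto sheaves supported on disjoint rational curves lie in $\cali^0$), and it \emph{invokes} Lemma \ref{F,G} twice as a known tool. As a proof of Lemma \ref{F,G} it is therefore empty, indeed circular. For the record, the paper does not reprove the lemma either: it quotes it from \cite[Lemma 7.1]{JMT1}; but a proof attempt for it must at least engage with the extension problem itself, which yours never does.

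For comparison, here is what such a proof requires. Let $p:\mathbf{B}=C\times\p3\to C$ be the projection and set $\mathcal{H}:=\inhom_{\calo_{\mathbf{B}}}(\mathbf{F},\mathbf{G})$. Since $\mathcal{H}$ is supported on $\supp(\mathbf{G})$, along which $\mathbf{F}$ is locally free, $\mathcal{H}$ is locally of the form $\mathbf{F}^\vee\otimes\mathbf{G}$, hence flat over $C$ and restricting on fibres to $\inhom(\mathbf{F}_t,\mathbf{G}_t)$. Hypothesis (\ref{vanish Hi}) together with cohomology and base change then gives $R^ip_*\mathcal{H}=0$ for $i\ge1$, and that $p_*\mathcal{H}$ is locally free with $p_*\mathcal{H}\otimes k(t)\simeq\Hom(\mathbf{F}_t,\mathbf{G}_t)$ for every $t$. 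Shrinking $C$ to an affine neighbourhood of $0$, the evaluation map $H^0(C,p_*\mathcal{H})\to\Hom(\mathbf{F}_0,\mathbf{G}_0)$ is surjective, so $s$ lifts to a morphism $\mathbf{s}:\mathbf{F}\to\mathbf{G}$ with $\mathbf{s}_0=s$. Finally, $\coker(\mathbf{s})|_{\{0\}\times\p3}=\coker(s)=0$ by right-exactness of restriction, so by properness of $p$ the image $p(\supp(\coker\mathbf{s}))$ is a closed subset of $C$ not containing $0$; removing it makes $\mathbf{s}$ an epimorphism. None of these steps --- the relative $\mathcal{H}om$ sheaf, its flatness, base change, lifting the section, openness of surjectivity --- appears in your proposal, so there is a complete gap between what was asked and what was argued.
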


\begin{proof}[Proof of Proposition \ref{disjoint rat curves}]
We argue by induction on $r$; the case $r=1$ is just the aforementioned result, namely \cite[Theorem 7.8]{JMT2}.

Let $Q':=\bigoplus_{j=1}^{r-1}\calo_{\Gamma_j}(-\pt)$, so that $Q=Q'\oplus\calo_{\Gamma_r}(-\pt)$. Let $E:=\ker\varphi$, and let $E'$ denote the kernel of the composition $F\stackrel{\varphi}{\onto} Q(2) \onto Q'(2) $. We obtain the following exact sequence:
$$ 0 \to E \to E' \stackrel{\varphi'}{\rightarrow} \calo_{\Gamma_r}((2m_r-1)\pt) \to 0. $$
By the induction hypothesis, $[E']\in\cali^0(c+m_1+\cdots+m_{r-1})$, thus one can find an affine open subset $0\in U\subset\mathbb{A}^1$ and a coherent sheaf $\mathbf{E}$ on $\p3\times U$, flat over $U$, such that $\mathbf{E}_0=E'$ and $\mathbf{E}_t$ is a locally free instanton sheaf of charge $c+m_1+\cdots+m_{r-1}$ satisfying $\mathbf{E}_t|_{\Gamma_r}\simeq 2\cdot\calo_{\Gamma_r}$ for every $t\in U\setminus\{0\}$. Setting $\mathbf{G}=:\pi^*Q'$ where $\pi:\p3\times U\to U$ is the projection onto the first factor, note that
$$ H^i(\inhom(\mathbf{E}_t,\mathbf{G}_t)) = H^i(2\cdot\calo_{\Gamma_r}((2m_r-1)\pt)) = 0 ~~{\rm for}~~ i\ge1 ~~{\rm and}~~ t\in U. $$
This claim is clear for $t\ne0$; when $t=0$, simply observe that the sequence $0\to E'\to F \to Q'(2)\to 0$ implies that $E'_{\Gamma_r}\simeq F_{\Gamma_r}$, since the support of $Q'$ is disjoint from $\Gamma_r$.

By Lemma \ref{F,G}, there exists an epimorphism $\mathbf{s}:\mathbf{E}\twoheadrightarrow\mathbf{G}$ extending $\varphi':E'\to \calo_{\Gamma_r}((2m_r-1)\pt)$, so that
$[\ker\mathbf{s}_t]\in\cald(m_r,c+m_1+\cdots+m_r)$, by construction. It then follows that $[E]\in\overline{\cald(m_r,c+m_1+\cdots+m_r)}$, hence, by \cite[Theorem 7.8]{JMT2}, $[E]\in\cali^0(c+m_1+\cdots+m_r)$, as desired.
\end{proof}

Next, we consider the following situation: let $\Sigma$ be an irreducible, nonsingular, complete intersection curve in $\p3$, given as the intersection surfaces of degrees $d_1$ and $d_2$, with $1\le d_1\le d_2$ and $(d_1,d_2)\ne(1,1),(1,2)$, and let $\Gamma$ be a smooth irreducible rational curve in $\p3$ of degree $m$ disjoint from $\Sigma$. Set $Q:=L\oplus\calo_{\Gamma}(-\pt)$ for some $L\in\Pic^{g-1}(\Sigma)$ such that $h^0(\Sigma,L)=h^1(\Sigma,L)=0$, where $g$ is the genus of $\Sigma$.

\begin{proposition} \label{disjoint curves}
If $F$ is a locally free instanton sheaf of charge $c$ such that $F|_{\Gamma}\simeq2\cdot\calo_{\Gamma}$, and $H^1(F^\vee|_{\Gamma}\otimes L(2))=0$.
If $\varphi:F\onto Q(2)$ is an epimorphism, then $[\ker\varphi]\in\overline{\calc(d_1,d_2,c+m)}$. 
\end{proposition}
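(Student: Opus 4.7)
The plan is to mimic the deformation strategy of Proposition~\ref{disjoint rat curves}, replacing the induction step with a single application of Lemma~\ref{F,G} in which the $\Gamma$-component of $Q$ is absorbed into the ambient locally free instanton, raising its charge by $m$. First I would split $\varphi$ along the two summands of $Q$. Writing $F_L := \ker\bigl(F \xrightarrow{\varphi} Q(2) \twoheadrightarrow \calo_\Gamma((2m-1)\pt)\bigr)$ for the kernel of the $\Gamma$-component, one obtains
\begin{equation*}
0 \to F_L \to F \to \calo_\Gamma((2m-1)\pt) \to 0 \qquad\text{and}\qquad 0 \to E \to F_L \xrightarrow{\varphi_L} (\iota_*L)(2) \to 0,
\end{equation*}
where $E := \ker\varphi$ and $\varphi_L$ is the $\Sigma$-component of $\varphi$. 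Since $F \in \cali(c)$ with $F|_\Gamma \simeq 2\calo_\Gamma$ and $\Gamma \in \calr^*_{0}(m)_F$, we have $[F_L] \in \cald(m, c+m)$, hence $[F_L] \in \cali^0(c+m)$ by \cite[Theorem~7.8]{JMT2}. This produces a smooth affine curve $U \ni 0$ and a flat family $\mathbf{F}_L$ on $\p3 \times U$ with $\mathbf{F}_{L,0} \simeq F_L$ and $\mathbf{F}_{L,t}$ a locally free instanton of charge $c+m$ for every $t \in U \setminus \{0\}$.

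With this family in hand, I would apply Lemma~\ref{F,G} to $\mathbf{F}_L$ and $\mathbf{G} := \pi^*(\iota_*L)(2)$, where $\pi: \p3 \times U \to \p3$ is the projection, in order to extend $\varphi_L$ to a relative epimorphism $\mathbf{s}: \mathbf{F}_L \twoheadrightarrow \mathbf{G}$ after possibly shrinking $U$. The kernel $\mathbf{E} := \ker \mathbf{s}$ is automatically flat over $U$ and satisfies $\mathbf{E}_0 \simeq E$, while for $t \ne 0$ the sequence $0 \to \mathbf{E}_t \to \mathbf{F}_{L,t} \to (\iota_*L)(2) \to 0$, combined with reflexivity of the locally free sheaf $\mathbf{F}_{L,t}$, forces $\mathbf{E}_t^{\vee\vee} \simeq \mathbf{F}_{L,t} \in \cali(c+m)$ and $\mathbf{E}_t^{\vee\vee}/\mathbf{E}_t \simeq (\iota_*L)(2)$. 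Hence $[\mathbf{E}_t] \in \calc(d_1,d_2,c+m)$ for $t \ne 0$, and specializing yields $[E] \in \overline{\calc(d_1,d_2,c+m)}$, which is the desired conclusion.

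The main obstacle will be verifying the hypotheses of Lemma~\ref{F,G}. Local freeness of $\mathbf{F}_L$ along $\mathrm{Supp}(\mathbf{G}) = \Sigma \times U$ follows from the fibrewise criterion for local freeness together with flatness, provided each $\mathbf{F}_{L,t}$ is locally free at every point of $\Sigma$; this is automatic for $t \ne 0$, and at $t = 0$ it uses the disjointness $\Sigma \cap \Gamma = \emptyset$, which forces $F_L$ to coincide with the locally free sheaf $F$ in a neighbourhood of $\Sigma$. For the cohomological vanishing $H^i(\inhom(\mathbf{F}_{L,t}, \mathbf{G}_t)) = 0$ for $i \ge 1$, local freeness of $\mathbf{F}_{L,t}$ along $\Sigma$ identifies
\begin{equation*}
\inhom\bigl(\mathbf{F}_{L,t}, (\iota_*L)(2)\bigr) \simeq \iota_*\bigl(\mathbf{F}_{L,t}^\vee|_\Sigma \otimes L(2)\bigr),
\end{equation*}
a sheaf on the curve $\Sigma$, so only $H^1$ can obstruct. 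Upper semicontinuity of $t \mapsto h^1(\Sigma, \mathbf{F}_{L,t}^\vee|_\Sigma \otimes L(2))$ reduces the verification to the special fibre, where $\mathbf{F}_{L,0}|_\Sigma = F|_\Sigma$ and the required vanishing is furnished by the cohomological hypothesis of the proposition. After possibly shrinking $U$, Lemma~\ref{F,G} then applies and the argument goes through.
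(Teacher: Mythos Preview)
Your proof is correct and follows essentially the same approach as the paper: both split off the $\Gamma$-component first to obtain an intermediate sheaf (your $F_L$, the paper's $E'$) lying in $\cali^0(c+m)$ via \cite[Theorem~7.8]{JMT2}, then apply Lemma~\ref{F,G} with $\mathbf{G}=\pi^*(\iota_*L)(2)$, using disjointness of $\Sigma$ and $\Gamma$ to identify $\inhom(F_L,(\iota_*L)(2))\simeq F^\vee|_\Sigma\otimes L(2)$ and the hypothesis plus semicontinuity to handle the required $H^1$-vanishing. You are in fact slightly more explicit than the paper about the local freeness of the family along $\Sigma\times U$, which is a welcome clarification.
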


\begin{proof}
The idea is the same as in the proof of Proposition \ref{disjoint rat curves}. Let $E'$ be the kernel of the composition  $F\stackrel{\varphi}{\onto}Q(2)\onto\calo_{\Gamma}((2m-1)\pt)$, so that $E:=\ker\varphi$ and $E'$ are related via the following exact sequence:
$$ 0 \to E \to E' \stackrel{\varphi'}{\rightarrow} L(2) \to 0. $$
By \cite[Theorem 7.8]{JMT2}, one can find an affine open subset $0\in U\subset\mathbb{A}^1$ and a coherent sheaf $\mathbf{E}$ on $\p3\times U$, flat over $U$, such that $\mathbf{E}_0=E'$ and $\mathbf{E}_t$ is a locally free instanton sheaf of charge $c+m$ for every $t\in U\setminus\{0\}$. 

Setting $\mathbf{G}:=\pi^*L(2)$, we must, in order to apply Lemma \ref{F,G}, check that
$$ H^i(\inhom(E_t,G_t))=0 ~~{\rm for}~~ i\ge1 ~~{\rm and}~~ t\in U. $$
Indeed, since $\dim G_t=1$, it is enough to show that $H^1(\inhom(E_t,G_t))=0$. Note that
$$ \inhom(E_0,G_0) = \inhom(E',L(2)) \simeq \inhom(F,L(2)) \simeq F^\vee|_{\Sigma}\otimes L(2) , $$
where the middle isomorphism follows from applying the functor $\inhom(\cdot,L(2))$ to the sequence 
$$ 0\to E' \to F \to \calo_{\Gamma}((2m-1)\pt) \to0, $$
also exploring the fact that $\Sigma$ and $\Gamma$ are disjoint. It follows that $H^1(\inhom(E_0,G_0))=H^1(F^\vee|_{\Sigma}\otimes L(2))=0$ by hypothesis. By semicontinuity of $h^1(\inhom(E_t,G_t))$, we can shrink $U$ to another affine open subset $U'\subset\mathbb{A}^1$, if necessary, to guarantee that $H^1(\inhom(E_t,G_t))=0$ for every $t\in U'$.

By Lemma \ref{F,G}, there exists an epimorphism $\mathbf{s}:\mathbf{F}\twoheadrightarrow\mathbf{G}$ extending $\varphi':E'\to L(2)$, so that $[\ker\mathbf{s}_t]\in\calc(d_1,d_2,c+m)$, by construction. Since $E\simeq \ker\mathbf{s}_0$, it follows that $[E]\in\overline{\calc(d_1,d_2,c+m)}$.
\end{proof}


\section{Moduli of sheaves of dimension one and Euler characteristic zero} \label{1d chi=0}

Given two integers $d$ and $\chi$, $d\ge1$, let $\calt(d,\chi)$ 
be the moduli space of semistable coherent sheaves on $\p3$ with Hilbert polynomial $P(t)=d\cdot t+\chi$. In this section, we focus on the space  $\calt(d):=\calt(d,0)$. 

Apart from its intrinsic interest, the space $\calt(d)$ is also relevant for the study of instanton sheaves, and the description of $\calt(d)$ for $d\le4$ provided in this section will be a key ingredient for the proof of the Main Theorem \ref{mthm1}. 

In addition, let $\calz(d)$ denote the set of rank 0 instanton sheaves of degree $d$ \emph{modulo S-equivalence} (which makes sense, since, by Lemma \ref{rk 0 semistable}, every rank 0 instanton sheaf is semistable). After a twist by $\op3(-2)$, $\calz(d)$ can be regarded as an open subscheme of the moduli space $\calt(d)$ consisting of those sheaves $Q$ satisfying $h^0(Q)=0$.

The space $\calt(d)$ has several distinguished subsets, which we now describe.

First, let $\calp_d \subset \calt(d)$ be the subset of planar sheaves; it is a fiber bundle over $(\PP^3)^*$ with fiber being the moduli space of semistable coherent sheaves on $\p2$ with Hilbert polynomial $P=d\cdot t$. In view of \cite[Theorem 1.1]{lepotier}, $\calp_d$ is a projective irreducible variety of dimension $d^2+4$. In particular, $\calp_d$ is closed. 

Next, consider the subsets $\calr^o_d,~ \cale^o_d \subset \calt(d)$ of sheaves supported on smooth rational curves of degree $d$, respectively, on smooth elliptic curves of degree $d$. Let $\calr_d$ and $\cale_d$ denote their closures.

Given a partition $(d_1,\ldots,d_s)$ of $d$ such that $d_1 \ge \cdots \ge d_s$, we denote by $\calt_{d_1, \ldots, d_s} \subset \calt(d)$ the locally closed subset of points of the form 
\begin{equation}\label{polystable}
[Q_1 \oplus \cdots \oplus Q_s],
\end{equation} 
where $Q_i$ gives a stable point in $\calt(d_i)$; in particular, $\calt_d$ is the open subset of stable points in $\calt(d)$. Let $\calt^o_{d_1, \ldots, d_s} \subset \MM_{d_1, \ldots, d_s}$ be the open dense subset given by the condition that $\support(Q_i)$ be mutually disjoint. Clearly,
each irreducible component of $\calt^o_{d_1, \ldots, d_s}$ is an open dense subset of an irreducible component of $\MM_d$. Hence irreducible components of $\overline{\MM}_{d_1, \ldots, d_s}$ are also irreducible components of $\MM(d)$. On the other hand, each point of $\MM(d)$ is an $S$-equivalence class of a polystable (e. g. stable) sheaf of the form (\ref{polystable}). Hence,
the following result follows.

\begin{lemma}\label{irred comp}
(i) All irreducible components of $\MM(d)$ are exhausted by the irreducible components of the union
\begin{equation}\label{union}
\underset{(d_1,...,d_s)}{\bigcup }\overline{\MM}_{d_1, \ldots, d_s},
\end{equation} 
this union being taken over all the partitions $(d_1,...,d_s)$ of $d$.\\
(ii) For a given partition $(d_1,...,d_s)$ of $d$, each irreducible component of $\overline{\MM}_{d_1, \ldots, d_s}$ is birational to a symmetric product
\[
(\XX_1 \times \cdots \times \XX_s)/\Sigma
\]
of irreducible components $\XX_i$ of $\calt_{d_i}$, where $\Sigma$ is the subgroup of the full symmetric group $\Sigma_s$ of degree $s$ generated by the transpositions $(i, j)$ for which $d_i = d_j$ and $\XX_i = \XX_j$.
\end{lemma}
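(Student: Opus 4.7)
The plan is to reduce both statements to the Jordan--Hölder decomposition of semistable sheaves. For part (i), every point of $\calt(d)$ is by construction an $S$-equivalence class of semistable sheaves, and each class contains a unique (up to reordering) polystable representative $Q_1\oplus\cdots\oplus Q_s$ with each $Q_i$ stable. Since all semistable sheaves with Hilbert polynomial $dt$ have reduced Hilbert polynomial $t$, each $Q_i$ must itself have $\chi(Q_i)=0$ and so a Hilbert polynomial of the form $d_it$ with $d_i\ge 1$ and $\sum d_i=d$. This produces the set-theoretic stratification $\calt(d)=\bigcup_{(d_1,\ldots,d_s)}\calt_{d_1,\ldots,d_s}$, a \emph{finite} union indexed by the partitions of $d$. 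Taking closures, $\calt(d)$ is the finite union of the closed subsets $\overline{\calt}_{d_1,\ldots,d_s}$, and hence each of its irreducible components is an irreducible component of some $\overline{\calt}_{d_1,\ldots,d_s}$.

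For part (ii), I would exhibit the natural rational map
\[
\mu\colon\calt_{d_1}\times\cdots\times\calt_{d_s}\dashrightarrow\overline{\calt}_{d_1,\ldots,d_s},\qquad (Q_1,\ldots,Q_s)\longmapsto [Q_1\oplus\cdots\oplus Q_s],
\]
regular on the open subset $U$ where the supports of the $Q_i$ are pairwise disjoint: over $U$ one pulls back (local) universal families from each factor, forms their direct sum on $U\times\p3$, which is flat over $U$ precisely because the supports are disjoint, and uses the moduli property of $\calt(d)$ to obtain a morphism $U\to\calt(d)$ whose image is exactly $\calt^o_{d_1,\ldots,d_s}$.

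Fix now an irreducible component $W$ of $\overline{\calt}_{d_1,\ldots,d_s}$. Since $\calt^o_{d_1,\ldots,d_s}$ is open and dense in $\overline{\calt}_{d_1,\ldots,d_s}$, the preimage $\mu^{-1}(W\cap\calt^o_{d_1,\ldots,d_s})$ is a non-empty open subset of $U$ and therefore meets some product $\XX_1\times\cdots\times\XX_s$ of irreducible components $\XX_i\subset\calt_{d_i}$ in an open dense subset; conversely, every such product dominates an irreducible component of $\overline{\calt}_{d_1,\ldots,d_s}$. Uniqueness of the Jordan--Hölder decomposition implies that the fibre of $\mu|_{\XX_1\times\cdots\times\XX_s}$ through a generic point $(Q_1,\ldots,Q_s)$ consists of those reorderings $(Q_{\sigma(1)},\ldots,Q_{\sigma(s)})$ with $\sigma\in\Sigma_s$ satisfying $d_{\sigma(i)}=d_i$ and $\XX_{\sigma(i)}=\XX_i$ for all $i$; these are precisely the elements of the subgroup $\Sigma$ described in the statement. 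Consequently $\mu$ factors through the quotient and induces the claimed birational equivalence $(\XX_1\times\cdots\times\XX_s)/\Sigma\dashrightarrow W$.

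The main obstacle I anticipate is the careful verification that $\mu$ is indeed a \emph{morphism} on $U$ and not merely a set-theoretic assignment: this requires the corepresentability of the Maruyama moduli functor together with a flatness check for the direct-sum family over $U$, which is automatic as soon as the supports are disjoint since the stalks then split as direct sums of the stalks of the factors. Once this technical point is secured, the group-theoretic book-keeping and the identification of the generic fibre follow from standard arguments.
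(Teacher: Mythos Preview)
Your proposal is correct and follows essentially the same approach as the paper: part (i) via the polystable Jordan--H\"older representative producing a finite stratification, and part (ii) via the direct-sum map $([Q_1],\ldots,[Q_s])\mapsto[Q_1\oplus\cdots\oplus Q_s]$, which becomes an isomorphism over $\calt^o_{d_1,\ldots,d_s}$ once one quotients by the appropriate permutation group. One small correction: flatness of the direct-sum family does \emph{not} require disjoint supports (a direct sum of flat families is always flat); disjointness is rather what makes the \emph{local inverse} exist, since on that locus the summands are recovered as the restrictions to the connected components of the support.
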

\begin{proof} We have only to prove statement (ii). Indeed, let $\Sigma' \subset \Sigma_s$ be the subgroup generated by the transpositions $(i, i+1)$ for which $d_i = d_{i+1}$. We have a bijective morphism
\[
(\calt_{d_1} \times \cdots \times \calt_{d_s})/\Sigma' \lra \calt_{d_1, \ldots, d_s}, \qquad ([Q_1], \ldots, [Q_s]) \longmapsto [Q_1 \oplus \cdots \oplus Q_s],
\]
which is an isomorphism over $\calt^o_{d_1, \ldots, d_s}$, because over this set we can construct local inverse maps. Whence, the statement (ii) follows.
\end{proof}

\begin{remark} \label{irred of Td} 
Lemma \ref{irred comp} implies that the problem of finding the irreducible components of $\calt(d)$ is reduced to the problem of finding the irreducible components of $\calt_2, \ldots, \calt_d$.
\end{remark}

\begin{remark}
It also follows from Lemma \ref{irred comp} that the number of irreducible components of $\calt(d)$ is at least as large as the number of partitions of $d$, usualy denoted $p(d)$. A well-known formula by Hardy and Ramanujan gives the following asymptotic expression
$$ p(d) \sim \frac{1}{4\sqrt{3}\cdot d}\exp\left(\pi\sqrt{\frac{2d}{3}}\right). $$
Therefore, the number of irreducible components of $\calt(d)$ grows at least exponentially on $\sqrt{d}$. However, as we shall see below in the cases $d=3$ and $d=4$, $p(d)$ is just a rough underestimate of the number of irreducible components of $\calt(d)$. 
\end{remark}

Given a coherent sheaf $Q$ on $\p3$, we define $Q^\dual := {\mathcal Ext}^c (Q, \omega_{\p3})$, where $c = \operatorname{codim} (Q)$. We will use below the following general result regarding stable sheaves in $\calt(d)$.

\begin{lemma} \label{trivial_lemma}
Assume that $\F$ gives a stable point in $\calt(d)$ and that $P \in \support(\F)$ is a closed point.
Then there are exact sequences
\begin{equation}
\label{trivial_extension}
0 \lra \E \lra \F \lra \CC_P \lra 0
\end{equation}
and
\begin{equation}
\label{trivial_sequence}
0 \lra \F \lra \G \lra \CC_P \lra 0
\end{equation}
for some sheaves $\E \in \calt(d,-1)$ and $\G \in \calt(d,+1)$.
\end{lemma}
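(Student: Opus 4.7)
The plan is to construct the two sequences symmetrically: the first by an elementary quotient construction, and the second by dualizing the first as applied to $\F^\dual$.

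For the first sequence, since $P\in\support(\F)$, Nakayama's lemma yields a surjection $\F\onto\CC_P$; let $\E$ denote its kernel, so that $P_\E(t)=dt-1$. It remains to verify Gieseker semistability. Any pure $1$-dimensional subsheaf $\E'\subset\E$ of multiplicity $a$ and Euler characteristic $b$ is in particular a subsheaf of the stable sheaf $\F$, hence $b/a<0$ and therefore $b\le -1$; since $\support(\E')\subseteq\support(\F)$ one has $a\le d$, so
\[
\frac{b}{a}\le -\frac{1}{a}\le -\frac{1}{d},
\]
which is exactly the semistability condition for a point of $\calt(d,-1)$.

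For the second sequence I would dualize. Stability implies purity, so $\F$ is a pure $1$-dimensional sheaf on $\p3$; being pure of codimension $2$ it is locally Cohen--Macaulay, and hence $\F^\dual=\inext^2(\F,\omega_{\p3})$ is again pure $1$-dimensional with $(\F^\dual)^\dual\cong\F$. The functor $(\cdot)^\dual$ is a well-known anti-equivalence on the category of pure $1$-dimensional sheaves which preserves (semi)stability and sends $\calt(d,\chi)$ to $\calt(d,-\chi)$; in particular $\F^\dual$ is a stable point of $\calt(d)$. Applying the first part of the lemma to $\F^\dual$ yields
\[
0\to\E'\to\F^\dual\to\CC_P\to 0
\]
with $\E'\in\calt(d,-1)$.

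I would then apply $R\inhom(-,\omega_{\p3})$ to this exact sequence. Since $\F^\dual$ and $\E'$ are pure $1$-dimensional, hence locally Cohen--Macaulay of codimension $2$, the sheaves $\inext^i(\F^\dual,\omega_{\p3})$ and $\inext^i(\E',\omega_{\p3})$ vanish for $i\ne 2$; on the other hand $\inext^i(\CC_P,\omega_{\p3})$ vanishes for $i\ne 3$ and equals $\CC_P$ for $i=3$. The long exact sequence thereby collapses to
\[
0\to\F\to(\E')^\dual\to\CC_P\to 0,
\]
so $\G:=(\E')^\dual\in\calt(d,+1)$ is the sought extension. The only non-formal input is the collapse of this long exact sequence, which rests on the standard equivalence between purity of a $1$-dimensional sheaf on $\p3$ and being locally Cohen--Macaulay of codimension $2$; once that is granted the rest of the argument is automatic.
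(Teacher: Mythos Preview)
Your proposal is correct and follows the same route as the paper: construct (\ref{trivial_extension}) directly from a surjection $\F\onto\CC_P$, then obtain (\ref{trivial_sequence}) by applying the first construction to $\F^\dual$ and dualizing back via the long exact sequence of $\inext^i(-,\omega_{\p3})$. The paper invokes \cite{rendiconti} and \cite[Proposition~1.1.10]{huybrechts_lehn} for the duality and purity facts that you instead extract from the observation that pure one-dimensional sheaves are locally Cohen--Macaulay; the two justifications are equivalent (and in fact the paper only needs $\inext^3(\F^\dual,\omega_{\p3})=0$, the vanishing for $\E'$ being automatic from the long exact sequence).
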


\begin{proof}
Choose a surjective morphism $\F \to \CC_P$ and denote its kernel by $\E$.
Since $\F$ is stable, $\E$ is semi-stable, so we have sequence (\ref{trivial_extension}).
According to \cite[Theorem 13]{rendiconti}, the dual sheaf $\F^\dual$ gives a stable point in $\calt(d)$.
Thus, we have an exact sequence
\[
0 \lra \E_1 \lra \F^\dual \lra \CC_P \lra 0
\]
with $\E_1 \in \calt(d,-1)$.
According to \cite[Remark 4]{rendiconti}, $\F$ is reflexive.
According to \cite[Theorem 13]{rendiconti}, the sheaf $\G = \E_1^\dual$ gives a point in $\calt(d,1)$.
Since $\F^\dual$ is pure, we can apply \cite[Proposition 1.1.10]{huybrechts_lehn} to deduce that ${\mathcal Ext}^3(\F^\dual, \omega_{\PP^3}) = 0$.
The long exact sequence of extension sheaves associated to the above exact sequence yields (\ref{trivial_sequence}).
\end{proof}

\bigskip

The goal of this section is to describe the irreducible components of $\calt(d)$ for $d\le4$. According to \cite{dedicata}, for $\F \in \calt(d)$ we have the following cohomological conditions
\begin{align*}
& \h^0 (\F) = 0 \qquad  \text{if $d = 1$ or $2$}, \\
& \h^0 (\F) \le 1 \qquad \text{if $d = 3$ or $4$}.
\end{align*}


\subsection{Moduli of sheaves of degree 1 and 2}

The case $d=1$ is easy: clearly, $\calt(1)\simeq \calr_1$, being isomorphic to the Grassmanian of lines in $\p3$.

\begin{proposition}
The moduli space $\calt(1)$ is an irreducible projective variety of dimension 4.
\end{proposition}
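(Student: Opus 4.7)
The plan is to identify $\calt(1)$ with the Grassmannian $G(2,4)$ of lines in $\p3$ by a direct classification of the sheaves parametrized.

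First, I would show that every point of $\calt(1)$ is represented by a sheaf of the form $\iota_*\calo_L(-1)$, where $\iota : L \hookrightarrow \p3$ is the inclusion of a line. Indeed, let $\F$ be semistable with $P_\F(t)=t$. Because the multiplicity is $1$, the sheaf $\F$ is automatically stable, hence pure of dimension $1$. Its scheme-theoretic support $Z := \supp(\F)$ is therefore a $1$-dimensional subscheme of $\p3$ with no embedded or lower-dimensional components. Its degree equals the multiplicity of $\F$, which is $1$; the only such $Z$ is a reduced line $L \subset \p3$. Then $\F$ is a rank $1$ torsion free coherent sheaf on $L \simeq \p1$, hence a line bundle $\calo_L(n)$ for some $n \in \Z$. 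The Hilbert polynomial of $\iota_*\calo_L(n)$ in $\p3$ is $t+n+1$, so the condition $\chi(\F)=0$ forces $n=-1$.

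Next, I would exhibit $\calt(1)$ as $G(2,4)$ at the level of moduli. The assignment $L \mapsto [\iota_*\calo_L(-1)]$ is set-theoretically a bijection between lines in $\p3$ and $\calt(1)$ by the step above. To upgrade this to a morphism of schemes, consider the universal line $\calz \subset G(2,4)\times \p3$ with projections $p : \calz \to G(2,4)$ and $q : \calz \to \p3$, and the family $\calf := q_*\bigl(\calo_{\calz}\otimes p^*\calo_{G(2,4)}(-\text{something})\bigr)$ or, more directly, the sheaf $q_*\calo_\calz(-1)$ constructed relative to the fibrewise tautological $\calo_{\p1}(-1)$ on each fibre of $p$. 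This is a flat family over $G(2,4)$ of stable sheaves in $\calt(1)$, and its modular morphism $G(2,4) \to \calt(1)$ is bijective on closed points. Since both spaces are reduced (the moduli space $\calt(1)$ is reduced because every sheaf parametrized is simple and the deformation theory is that of $\calo_L(-1)$) and $G(2,4)$ is smooth, this bijective morphism is an isomorphism by Zariski's Main Theorem.

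The conclusion is then immediate: $G(2,4)$ is a smooth irreducible projective variety of dimension $2(4-2)=4$, whence so is $\calt(1)$. The only mildly delicate step is the upgrade from a set-theoretic bijection to a scheme-theoretic isomorphism; all remaining ingredients are standard facts about semistable sheaves of multiplicity one. Observe that $\calt(1)$ coincides with the closed stratum $\calr_1$ introduced earlier, consistently with the general discussion of the strata of $\calt(d)$.
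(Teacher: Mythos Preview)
Your approach is correct and matches the paper's: the paper simply asserts, without further argument, that $\calt(1)\simeq\calr_1$ is the Grassmannian of lines in $\p3$, and your sketch supplies exactly the details behind this identification. (One minor slip: in your family construction you write $q_*\calo_{\calz}(-1)$ with $q:\calz\to\p3$, but you want the pushforward along the closed immersion $\calz\hookrightarrow G(2,4)\times\p3$, not along $q$; the intended family is clear nonetheless.)
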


In addition, it is easy to see that $\calz(1)=\calt(1)$. 

\begin{proposition} \label{components_2}
The moduli space $\calt(2)$ is connected, and has two irreducible components, each of dimension 8: $\bP_2$ (which coincides with $\RR_2$) and $\overline{\MM}_{1, 1}$.
\end{proposition}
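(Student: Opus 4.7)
By Remark \ref{irred of Td}, the irreducible components of $\calt(2)$ arise as components of $\overline{\calt_2}$ or $\overline{\calt_{1,1}}$, corresponding to the two partitions $(2)$ and $(1,1)$ of $d=2$. The plan is to identify each closure as an irreducible variety of dimension $8$, verify distinctness, and exhibit a common point for connectedness.

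For the polystable stratum: every semistable sheaf in $\calt(1)$ is automatically stable, and $\calt_1 = \calt(1)$ is isomorphic to the Grassmannian of lines in $\p3$ via $L \mapsto \calo_L(-1)$, so $\calt_1$ is irreducible of dimension $4$. Lemma \ref{irred comp}(ii) then gives that $\overline{\calt_{1,1}}$ is birational to the symmetric product $(\calt_1 \times \calt_1)/\Sigma_2$, hence irreducible of dimension $8$.

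For the stable stratum, one shows $\overline{\calt_2} = \bP_2 = \calr_2$, an irreducible variety of dimension $2^2 + 4 = 8$ by \cite[Theorem 1.1]{lepotier}. The equality $\bP_2 = \calr_2$ follows because a generic point of $\bP_2$ is a line bundle of degree $-1$ on a smooth plane conic, which is a smooth conic in $\p3$; hence $\calr^o_2$ is dense in $\bP_2$. Since such generic sheaves are stable, $\calt_2 \cap \bP_2$ is dense in $\bP_2$, yielding $\bP_2 \subseteq \overline{\calt_2}$. The reverse inclusion requires showing that every stable $\F \in \calt(2)$ is planar, which reduces to a case analysis on the reduced support $C_{\mathrm{red}}$ of $\F$: a smooth conic or two lines meeting at a point lies in a plane; two skew lines would force $\F$ to split and contradict stability; the remaining case of a single-line support with a nontrivial double structure is ruled out by exhibiting a destabilizing subsheaf $\calo_L(-1) \into \F$ via the natural two-step filtration coming from the ideal of $L$ in the schematic support of $\F$, together with a vanishing computation for the relevant obstruction in $\Ext^1_{\p3}$ using $N_{L/\p3} \simeq \calo_L(1)^{\oplus 2}$.

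For distinctness, a generic element of $\bP_2$ is stable (hence not in $\overline{\calt_{1,1}}$), while a generic element of $\overline{\calt_{1,1}}$ has disconnected support (hence not in $\bP_2$), so neither component contains the other. For connectedness, take two lines $L_1, L_2$ meeting at a point; they lie in a common plane $H$, and the polystable class $[\calo_{L_1}(-1) \oplus \calo_{L_2}(-1)]$ lies in $\calt_{1,1} \subseteq \overline{\calt_{1,1}}$ and also, being supported on the planar configuration $L_1 \cup L_2 \subset H$, in $\bP_2$. The main obstacle is the double-line subcase in the case analysis above, where one must rule out stable sheaves supported on non-planar double structures of a line; this is precisely what the lifting/obstruction calculation sketched above is designed to handle.
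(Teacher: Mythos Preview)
Your overall structure---splitting into $\calt_2$ and $\calt_{1,1}$ via Lemma~\ref{irred comp}, computing dimensions, and exhibiting a common point for connectedness---matches the paper. The divergence is entirely in how you prove that every stable $\F\in\calt_2$ is planar.

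The paper does this in one line using Lemma~\ref{trivial_lemma}: for stable $\F\in\calt_2$, the sequence~(\ref{trivial_sequence}) embeds $\F$ into some $\G\in\calt(2,1)$; but every semistable sheaf with Hilbert polynomial $2t+1$ is the structure sheaf of a conic, hence planar, and therefore $\F\subset\G$ is planar as well. No case analysis on the support is needed. This is exactly the mechanism the paper reuses (in more elaborate form) for $d=3$ and $d=4$, so proving $d=2$ this way also sets up the later arguments.

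Your case analysis is a legitimate alternative, but the line-supported case is mishandled. The two-step filtration $I_L\F\subset\F$ gives a pure subsheaf $I_L\F\simeq\calo_L(a)$, and stability of $\F$ (slope $0$) forces $a+1<0$, i.e.\ $a\le -2$; so this subsheaf has slope $\le -1$ and does \emph{not} destabilize $\F$. There is no reason for $\calo_L(-1)$ to embed in $\F$, and no ``obstruction in $\Ext^1$'' will manufacture one. The correct conclusion is not that $\F$ is unstable but that $\F$ is already planar: passing to the torsion-free quotient of $\F|_L$ one finds that $\F$ sits in a nonsplit extension $0\to\calo_L(-2)\to\F\to\calo_L\to 0$, and since $\Ext^1_{\p3}(\calo_L,\calo_L(-2))\simeq\CC\simeq\Ext^1_H(\calo_L,\calo_L(-2))$ for any plane $H\supset L$, this unique extension is realized on $H$. (Equivalently: every Cohen--Macaulay double structure on $L$ in $\p3$ is planar, because the only line-bundle quotient of $N_{L/\p3}^\vee=\calo_L(-1)^{\oplus 2}$ is $\calo_L(-1)$.) So your approach can be completed, but the endgame is ``show planarity,'' not ``destabilize.''
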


\begin{proof}
If $\F \in \MM_2$, then we have the exact sequence (\ref{trivial_sequence}) in which $\G \in \MM(2,1)$.
Thus, $\G$ is the structure sheaf of a conic curve, hence $\G$ is planar, and hence $\F$ is planar.
We conclude that $ \MM(2) = \bP_2 \cup \overline{\MM}_{1,1}$. The intersection $\bP_2 \cap \overline{\MM}_{1,1}$ consists of those points of the form $[\calo_{\ell_1}(-1)\oplus\calo_{\ell_2}(-1)]$ where $\ell_1$ and $\ell_2$ are two intersecting (and possibly coincident) lines.
\end{proof}

Note also that $\calz(2)=\calt(2)$; the fact that $\calz(2)$ consists of two irreducible components of dimension 8 should be compared with \cite[Corollary 6.12]{hauzer}, where Hauzer and Langer prove that the moduli space of \emph{framed} rank 0 instanton sheaves also consists of two irreducible components of dimension 8.


\subsection{Moduli of sheaves of degree 3}

\begin{proposition} \label{components_3}
The moduli space $\MM(3)$ has four irreducible components $\bP_3$, $\RR_3$, $\overline{\MM}_{2,1}$ and $\overline{\MM}_{1,1,1}$, of dimension 13, 13, 12, respectively, 12.
If $\F \in \MM_3$ and $\H^0(\F) \neq 0$, then $\F$ is the structure sheaf of a planar cubic curve.
\end{proposition}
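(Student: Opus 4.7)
The plan is to apply Lemma~\ref{irred comp}, which reduces the problem to determining the components of $\overline{\calt_3}$, $\overline{\calt_{2,1}}$, and $\overline{\calt_{1,1,1}}$. For the reducible partitions, Lemma~\ref{irred comp}(ii) yields the birational descriptions $\overline{\calt_{2,1}}\sim\calt_2\times\calt_1$ and $\overline{\calt_{1,1,1}}\sim\operatorname{Sym}^3\calt_1$. The proof of Proposition~\ref{components_2} shows that the strictly semistable locus in $\calt(2)$ consists precisely of the poly-stable sums $\calo_{\ell_1}(-1)\oplus\calo_{\ell_2}(-1)$, so $\calt_2$ is the open dense subset of stable sheaves in the irreducible $8$-dimensional variety $\calp_2$. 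Combined with $\calt_1\simeq G(2,4)$ of dimension $4$, this gives $\dim\overline{\calt_{2,1}}=\dim\overline{\calt_{1,1,1}}=12$, each irreducible. On the other hand, $\calp_3$ is irreducible of dimension $3^2+4=13$ by \cite[Theorem~1.1]{lepotier}, and $\calr_3$ is irreducible of dimension $13$: over the $12$-dimensional irreducible Hilbert scheme of smooth twisted cubics $\Gamma\simeq\p1$, the stable sheaves with Hilbert polynomial $3t$ form the $1$-dimensional Picard torsor $\Pic^{-1}(\Gamma)$ of line bundles $\calo_\Gamma(-P)$.

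The crux is to establish $\calt_3\subseteq\calp_3\cup\calr_3$. Given a stable $[\F]\in\calt_3$, the scheme-theoretic support $C$ is connected (otherwise $\F$ splits as a direct sum, contradicting stability) and Cohen--Macaulay (by purity of $\F$). A direct rank/slope analysis rules out the possibility that the reduced support has degree $\le 2$ (on a line, rank $\ge 2$ sheaves split off a destabilizing line subbundle; on a reduced conic, a multiplicity-$3$ pure sheaf would require non-integer generic rank; the non-reduced thickenings reduce to planar cases), so $C$ has degree $3$ and $\F$ is a rank-$1$ torsion-free sheaf on $C$. If $C$ is planar, then $[\F]\in\calp_3$. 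Otherwise $C$ is nondegenerate of arithmetic genus $0$, so by the Piene--Schlessinger classification of $\Hilb^{3t+1}(\p3)$ the curve $C$ lies in the smooth irreducible $12$-dimensional Cohen--Macaulay component whose general member is a smooth twisted cubic; a standard relative Picard argument then deforms $\F$ flatly to a degree-$(-1)$ line bundle on a smooth twisted cubic, giving $[\F]\in\calr_3$.

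For the distinctness of the four closures, I would observe that their generic points exhibit characteristic features of the support that are preserved under specialization within each closure: planarity for $\calp_3$, nondegenerate connected irreducibility for $\calr_3$, a disjoint conic $\sqcup$ line for $\overline{\calt_{2,1}}$, and three mutually disjoint lines for $\overline{\calt_{1,1,1}}$. Hence no closure is contained in another, and together they exhaust $\calt(3)$. Finally, for the cohomological assertion, suppose $\F\in\calt_3$ satisfies $\H^0(\F)\ne 0$, and pick a nonzero section, producing a subsheaf $\calo_{C'}\hookrightarrow\F$; since $\F$ is pure of dimension $1$, so is $C'$. Semistability of $\F$ applied to $\calo_{C'}\subset\F$ yields $\chi(\calo_{C'})/\deg C'\leq 0$, i.e.\ $p_a(C')\ge 1$. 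For $\deg C'\le 3$ this forces $\deg C'=3$ and $p_a(C')=1$, so $C'$ is a planar cubic curve; comparing Hilbert polynomials gives $\calo_{C'}=\F$, as desired.

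The main obstacle will be the second paragraph: rigorously excluding the various reducible, non-reduced, or non-planar supports of small reduced degree, and implementing the relative Picard deformation from an arbitrary Cohen--Macaulay cubic in $H_0$ to a smooth twisted cubic.
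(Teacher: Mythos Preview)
Your reduction via Lemma~\ref{irred comp} and your argument for the last assertion (when $\H^0(\F)\neq 0$) are essentially the same as the paper's. The substantive divergence is in proving $\calt_3\subset\calp_3\cup\calr_3$: you attempt a direct classification of possible scheme-theoretic supports, whereas the paper bypasses this entirely by invoking Lemma~\ref{trivial_lemma}. For any stable $\F\in\calt_3$ and any $P\in\support(\F)$ one gets an exact sequence $0\to\F\to\G\to\CC_P\to 0$ with $\G\in\calt(3,1)$, and the Freiermuth--Trautmann classification of $\calt(3,1)$ (two components: planar sheaves and the closure $\RR$ of structure sheaves of twisted cubics) immediately gives the dichotomy. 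For the non-planar case the paper then observes that $\F\simeq\cali_{P,R}$ with $R=\support(\F)$, so the map $\rho\colon[\F]\mapsto[\calo_{\support(\F)}]$ is an injective morphism from $\calt_3\setminus\calp_3$ to the irreducible variety $\RR\setminus\bP$, whence irreducibility of $\calt_3\setminus\calp_3$. No relative Picard scheme or support-by-support case analysis is needed.

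Your direct approach has genuine gaps beyond those you flag. The parenthetical ``the non-reduced thickenings reduce to planar cases'' is false: there exist non-planar Cohen--Macaulay triple structures on a line (these appear in Nollet's description of $\Hilb^{3t+1}(\p3)$ and lie in the twisted-cubic component), and a rank-one torsion-free sheaf on such a curve is a legitimate candidate for $\F$ that your degree-$\le 2$ reduced-support argument does not exclude. More seriously, your ``standard relative Picard argument'' over the Cohen--Macaulay locus of $\Hilb^{3t+1}(\p3)$ would require control of the compactified Jacobian over singular and non-reduced fibres, which is delicate; the paper's injective-morphism trick avoids this completely. Finally, a small slip: for a smooth twisted cubic $\Gamma\simeq\p1$ the set $\Pic^{-1}(\Gamma)$ is a single point (all $\calo_\Gamma(-P)$ are isomorphic), not a one-dimensional torsor, so your dimension count for $\calr_3$ is off by one; the paper's argument gives $\dim(\calt_3\setminus\calp_3)=\dim(\RR\setminus\bP)=12$ via the injectivity of $\rho$.
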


\begin{proof}
By Proposition \ref{components_2} we have $\overline{\MM}_2=\bP_2$, so that in view of Lemma \ref{irred comp} we already obtain the irreducible components $\overline{\MM}_{2,1}$ and $\overline{\MM}_{1,1,1}$ of $\MM(3)$. Therefore, by Remark \ref{irred of Td}, we only have to find the irreducible components of $\MM_3$.

Thus, given $\F \in \MM_3$, take a point $P \in \support(\F)$. We then have the exact sequence (\ref{trivial_sequence}) for $\G \in \MM(3,1)$.
According to \cite[Theorem 1.1]{freiermuth_trautmann}, $\MM(3,1)$ has two irreducible components:
the subset $\bP$ of planar sheaves and the subset $\RR$ that is the closure of the set of structure sheaves of twisted cubics.
Moreover, all sheaves in $\RR \setminus \bP$ are structure sheaves of curves $R \subset \PP^3$ of degree $3$ and arithmetic genus zero.
If $\G$ is planar, then $\F$ is planar.
If $\G = \calo_R$, then $R = \support(\F)$, where the scheme-theoretic support $\support(\F)$ of the sheaf $\F$ is defined by the 0-th Fitting ideal $\F itt^0(\F)$:\ \ $\I_{R/\PP^3}=\F itt^0(\F)$.
The morphism
\[
\rho \colon \MM_3 \setminus \bP_3 \lra \RR \setminus \bP, \qquad \rho ([\F]) = [\calo_{\support(\F)}],
\]
is injective. Indeed, if $\rho ([\F_1]) = \rho ([\F_2])$, then $\support(\F_1) = \support(\F_2) = R$.
Choose a point $P \in R$.
We have exact sequences
\[
0 \lra \F_1 \lra \G_1 \lra \CC_P \lra 0, \qquad
0 \lra \F_2 \lra \G_2 \lra \CC_P \lra 0,
\]
with $\G_1, \G_2 \in\MM(3,1)$.
Clearly, $\G_1$ and $\G_2$ are both isomorphic to $\calo_R$, hence $\F_1$ and $\F_2$ are both isomorphic to the ideal sheaf $\I_{P, R}$ of $P$ in $R$.
The image of $\rho$ is a constructible set of the irreducible variety $\RR \setminus \bP$ and contains an open subset of $\RR \setminus \bP$,
namely the subset given by the condition that $R$ be irreducible.
Indeed, if $R$ is irreducible, then it is easy to check that $\I_{P, R}$ is stable; we have $\rho ([\I_{P, R}]) = [\calo_R]$.
We deduce that $\MM_3 \setminus \bP_3$ is irreducible.
It follows that $\RR_3^o$ is dense in $\MM_3 \setminus \bP_3$.
Thus, $\MM_3$ has two irreducible components, hence $\MM(4)$ has the four irreducible components announced in the proposition.

Assume now that $\H^0(\F) \neq 0$. Then $\F$ cannot be isomorphic to $\I_{P, R}$, hence $\F$ is planar. Take a non-zero morphism $\calo \to \F$. This morphism factors through an injective morphism $\calo_C \to \F$, where $C$ is a planar curve. The semi-stability of $\F$ implies that $C$ is a cubic. Comparing Hilbert polynomials, we see that $\calo_C \to \F$ is an isomorphism.
\end{proof}



\subsection{Moduli of sheaves of degree 4}

\begin{proposition} \label{components_4}
The moduli space $\MM(4)$ has eight irreducible components:
$\bP_4$, $\EE_4$, $\RR_4$, $\overline{\MM}_{2,2}$, $\overline{\MM}_{2,1,1}$, $\overline{\MM}_{1,1,1,1}$ and two irreducible components of $\MM_{3,1}$
that are birational to $\bP_3 \times \MM_1$, respectively, to $\RR_3 \times \MM_1$.
Their dimensions are, respectively, $20$, $18$, $16$, $16$, $16$, $16$, $17$, $17$.
\end{proposition}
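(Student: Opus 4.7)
The plan is to follow the strategy of Proposition~\ref{components_3}: decompose $\calt(4)$ via Lemma~\ref{irred comp} into pieces $\overline{\calt}_{d_1,\ldots,d_s}$ indexed by partitions of $4$, and treat each. For partitions of length at least two, Lemma~\ref{irred comp}(ii) together with the known components of $\calt_1$, $\calt_2$, $\calt_3$ (from Propositions~\ref{components_2} and \ref{components_3}) immediately produces one component apiece from $(1,1,1,1)$, $(2,1,1)$, $(2,2)$---all of dimension $16$---and two components from $(3,1)$, birational to $\bP_3 \times \calt_1$ and $\calr_3 \times \calt_1$ respectively, each of dimension $17$.

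The core task is then to identify the irreducible components of the stable locus $\calt_4$. Given $[\F]\in\calt_4$ and a point $P \in \support(\F)$, Lemma~\ref{trivial_lemma} yields $[\G] \in \calt(4,1)$ fitting into $0 \to \F \to \G \to \CC_P \to 0$. To exploit this I would first establish a structure theorem for $\calt(4,1)$ analogous to the Freiermuth--Trautmann description of $\calt(3,1)$ used in Proposition~\ref{components_3}: namely, $\calt(4,1)$ should have exactly three irreducible components---a planar one, an elliptic-quartic one whose generic member is $\iota_*L$ with $L\in\Pic^1(\Sigma)$ on a smooth elliptic quartic $\Sigma$, and a rational-quartic one whose generic member is $\calo_R$ for a smooth rational quartic $R$.

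With this classification, the three cases translate into three irreducible components of $\calt_4$: when $\G$ is planar, so is $\F$, contributing to $\bP_4$; when $\G = \iota_*L$ on an elliptic quartic $\Sigma$, one has $\F \cong \iota_*(L \otimes \calo_\Sigma(-P))$ with $L \otimes \calo_\Sigma(-P) \in \Pic^0(\Sigma)$, contributing to $\cale_4$; and when $\G = \calo_R$ on a rational quartic $R$, one has $\F \cong \iota_* M$ with $M$ the unique degree-$(-1)$ line bundle on $R$, so $\F$ is uniquely determined by $R$, contributing to $\calr_4$. Mimicking the injectivity argument for the map $\rho$ in Proposition~\ref{components_3}, one shows these strata embed open-densely in distinct components. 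A direct parameter count---the LePotier formula for $\bP_4$, the Hilbert scheme of rational quartics for $\calr_4$, and the Hilbert scheme of elliptic quartics together with their relative Picard scheme for $\cale_4$---then gives the claimed dimensions $20$, $18$, and $16$.

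The hard part will be the classification of $\calt(4,1)$, which is not directly available from the literature cited for the $d=3$ case; it requires a detailed analysis of stable sheaves with Hilbert polynomial $4t+1$ organized by the geometry of their scheme-theoretic supports, ruling out any additional components. A secondary subtlety lies in the dimension computation for $\cale_4$, where one must carefully account for the Hilbert scheme of elliptic quartics together with the relative Picard variety. Once these ingredients are in place, Lemma~\ref{irred comp}(i) guarantees that the eight components assembled above exhaust all of $\calt(4)$.
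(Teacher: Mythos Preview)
Your overall strategy---decompose via Lemma~\ref{irred comp} and then analyze $\calt_4$ through the extension $0 \to \F \to \G \to \CC_P \to 0$ with $\G \in \calt(4,1)$---matches the paper's. But you have misjudged where the real work lies, and there is a genuine gap.

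First, the classification of $\calt(4,1)$ into three components (planar, rational-quartic, elliptic-quartic) is not something you need to establish: it is already available as \cite[Theorem~4.12]{choi_chung_maican}, which the paper simply cites. So this is not the ``hard part.''

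The actual difficulty, which your proposal skips entirely, is the elliptic case. You write that when $\G = \iota_*L$ on a smooth elliptic quartic $\Sigma$, then $\F \cong \iota_*(L \otimes \calo_\Sigma(-P))$, and conclude $\F \in \EE_4$. This is correct for \emph{generic} $\G$, but the elliptic component $\EE \subset \calt(4,1)$ contains much more than line bundles on smooth elliptic quartics. According to \cite{space_quartics}, the sheaves in $\EE \setminus \bP$ are of two kinds: (i) $\calo_E(P')$ for an arbitrary, possibly highly singular, arithmetic-genus-$1$ curve $E$ cut out by two quadrics; and (ii) non-planar extensions $0 \to \calo_L(-1) \to \G \to \calc \to 0$ with $\calc$ a planar cubic sheaf and $L$ a line. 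To conclude that $\calt_4 \setminus (\bP_4 \cup \calt_{4,\rat}) \subset \EE_4$, one must show that no stable $\F$ arises from a $\G$ of type (ii), and that every $\F$ arising from a singular $E$ in type (i) is still a limit of sheaves on smooth elliptic quartics. The paper devotes nine separate claims to this: ruling out type (ii) by a stability argument; handling $E$ on a smooth quadric via $\MM_{\p1\times\p1}(2,2,4)$; proving a general deformation statement for $\calo_E(P'-P)$ with $P\neq P'$ using the Hilbert scheme $\Hilb_{\p3}(4t)$; and, most delicately, treating the cases where $E$ lies on an irreducible cone or is a quadruple line of the form $(H\cup H')\cap(2H'')$, each requiring explicit computations of $\Ext^1(\calo_C,\calo_{C'}(-1))$ to show that the relevant extension classes are swept out by families of the form $\calo_{E_t}(P'-P)$. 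None of this is captured by ``mimicking the injectivity argument for $\rho$,'' which is tailored to the rational-quartic stratum where the non-planar sheaves are all structure sheaves. Without the degenerate analysis you cannot exclude extra components of $\calt_4$ lying over the singular boundary of $\EE$, and the component count is incomplete.
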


\begin{proof} By Propositions \ref{components_2} and \ref{components_3} and Lemma \ref{irred comp} we already have 5 irreducible components of $\MM(4)$ which are 
$\overline{\MM}_{2,2}$, $\overline{\MM}_{2,1,1}$, $\overline{\MM}_{1,1,1,1}$ and two irreducible components of $\MM_{3,1}$ that are birational to $\bP_3 \times \MM_1$, respectively, to $\RR_3 \times \MM_1$.  Therefore by Remark \ref{irred of Td} we have only to find irreducible components of $\MM_4$. Thus, given $\F \in \MM_4$, take a point $P \in \support(\F)$. We then have the exact sequence (\ref{trivial_sequence}) for $\G \in \MM(4,1)$.
According to \cite[Theorem 4.12]{choi_chung_maican}, $\MM(4,1)$ has three irreducible components:
the subset $\bP$ of planar sheaves, the subset $\RR$ that is the closure of the set of structure sheaves of rational quartic curves, and the set $\EE$
that is the closure of the set of sheaves of the form $\calo_E(P')$, where $E$ is a smooth elliptic quartic curve and $P' \in E$.
If $\G \in \bP$, then $\F \in \bP_4$.
The sheaves in $\RR \setminus (\bP \cup \EE)$ are structure sheaves of quartic curves of arithmetic genus zero.
The sheaves in $\EE \setminus \bP$ are supported on quartic curves of arithmetic genus $1$.
Let $\MM_{4, \rat} \subset \MM_4$ be the subset of sheaves whose support is a quartic curve of arithmetic genus zero.
As in Proposition \ref{components_3}, we can construct an injective dominant morphism
\[
\rho \colon \MM_{4, \rat} \lra \RR \setminus (\bP \cup \EE), \qquad \rho [\F] = [\calo_{\support(\F)}].
\]
It follows that $\MM_{4, \rat}$ is irreducible, hence $\MM_{4, \rat} \subset \RR_4$.
To finish the proof of the proposition we need to show that $\MM_4 \setminus (\bP_4 \cup \MM_{4, \rat})$ is contained in $\EE_4$.

According to \cite{space_quartics}, discussion after Proposition 8, the sheaves $\G$ in $\EE \setminus \bP$ are of two kinds:
\begin{enumerate}
\item[(i)] $\calo_E(P')$ for a curve $E$ of arithmetic genus $1$ given by an ideal of the form $(q_1, q_2)$, where $q_1$, $q_2$ are quadratic forms,
and $P' \in E$. Notice that $\Ext^1_{\calo_{\PP^3}}(\CC_{P'}, \calo_E) \isom{}\CC$, so the notation $\calo_E(P')$ is justified.
Also note that $\calo_E$ is stable;
\item[(ii)] non-planar extensions of the form
\[
0 \lra \calo_L(-1) \lra \G \lra \calc \lra 0,
\]
where $L$ is a line and $\calc$ gives a point in $\MM_H(3,1)$ for a plane $H$ possibly containing $L$. (Here and below we use the notation $\MM_S(d,\chi)$ for the moduli space of 1-dimensional sheaves on a given surface $S$ in $\PP^3$ with Hilbert polynomial $P(t)=dt+\chi$. We also set $\MM_S(d):=\MM_S(d,0)$.) 
\end{enumerate}

\noindent \\
\emph{Claim 1}. Case (ii) is unfeasible.

\noindent \\
Assume, firstly, that $P \in H$.
Tensoring (\ref{trivial_sequence}) with $\calo_H$, we get the exact sequence
\[
\F_{| H} \lra \G_{| H} \overset{\alpha}{\lra} \calc_P \lra 0.
\]
Thus, $\Ker(\alpha)$ is a quotient sheaf of $\F$ of slope zero. This contradicts the stability of $\F$.
Assume, secondly, that $P \notin H$.
According to \cite[Sequence (10)]{space_quartics}, we have an exact sequence
\[
0 \lra \E \lra \G \lra \calo_L \lra 0
\]
for some sheaf $\E \in \MM_H (3)$. The composite map $\E \to \G \to \calc_P$ is zero, hence $\E$ is a subsheaf of $\F$.
This contradicts the stability of $\F$ and proves Claim 1.

\noindent \\
It remains to deal with the sheaves from (i). We have one of the following possibilities:
\begin{enumerate}
\item[(a)] $E$ is contained in a smooth quadric surface $S$;
\item[(b)] $E$ is contained in an irreducible cone $\Sigma$ but not in a smooth quadric surface;
\item[(c)] $\operatorname{span}\{ q_1, q_2 \}$ contains only reducible quadratic forms and $q_1$, $q_2$ have no common factor.
\end{enumerate}

\noindent \\
\emph{Claim 2}. In case (a), $\F$ belongs to $\EE_4$.

\noindent \\
Notice that $\F \in \MM_S(4)$.
According to \cite[Proposition 7]{ballico_huh}, $\MM_S(4)$ has five disjoint irreducible components $\MM_S (p, q, 4)$, where $(p, q)$ is the type of the support of the 1-dimensional sheaf w.r.t. $\mathrm{Pic}S$. Clearly, $\F \in \MM_S (2, 2, 4)$.
Thus, $\F$ is a limit of sheaves in $\MM_S (2, 2, 4)$ supported on smooth curves of type $(2, 2)$, hence $\F \in \EE_4$.

\noindent \\
It remains to deal with cases (b) and (c). Next we reduce further to the case when $P = P'$.
Notice that, if $P \neq P'$, then $\F \simeq \calo_E(P') \tensor (\calo_E(P))^\dual$, hence the notation $\F = \calo_E(P' - P)$ is justified.

\noindent \\
\emph{Claim 3}. Assume that $\F = \calo_E(P' - P)$ for an elliptic quartic curve $E$ and distinct closed points $P', P \in E$.
Then $\F$ belongs to $\EE_4$.

\noindent \\
Let $Z_1, \ldots, Z_m$, denote the irreducible components of $E$. Fix $i, j \in \{ 1, \ldots, m \}$.
Consider the locally closed subset $\XX \subset \EE \times \EE$ of pairs $([\calo_{E'}(P_1)], [\calo_{E'}(P_2)])$, where $E'$ is a quartic curve
of arithmetic genus $1$ whose ideal is generated by two quadratic polynomials, and $P_1$, $P_2$ are distinct points on $E'$
such that $P_1 \notin \cup_{k \neq i} Z_k$, $P_2 \notin \cup_{k \neq j} Z_k$.
Consider the morphisms
\[
\xi \colon \XX \lra \MM(4), \qquad ([\calo_{E'}(P_1)], [\calo_{E'}(P_2)]) \longmapsto [\calo_{E'}(P_1 - P_2)],
\]
\[
\sigma \colon \XX \lra \Hilb_{\PP^3}(4t), \qquad ([\calo_{E'}(P_1)], [\calo_{E'}(P_2)]) \longmapsto E',
\]
where $\Hilb_{\PP^3}(4t)$ is the Hilbert scheme of subschemes of $\PP^3$ with Hilbert polynomial $P(t)=4t$.
According to \cite[Examples 2.8 and 4.8]{chen_nollet}, $\Hilb_{\PP^3}(4t)$ consists of two irreducible components, denoted $\HH_1$ and $\HH_2$.
The generic member of $\HH_1$ is a smooth elliptic quartic curve.
The generic member of $\HH_2$ is the disjoint union of a planar quartic curve and two isolated points.
Note that $\HH_2$ lies in the closed subset $\{ E' \mid \ \h^0(\calo_{E'}) \ge 3 \}$.
Since $E$ lies in the complement of this subset, we deduce that $E \in \HH_1$.
It follows that there exists an irreducible quasi-projective curve $\Gamma \subset \Hilb_{\PP^3}(4t)$ containing $E$,
such that $\Gamma \setminus \{ E \}$ consists of smooth elliptic quartic curves (see the proof of \cite[Proposition 12]{space_quartics}).
The fibers of the map $\sigma^{-1} (\Gamma) \to \Gamma$ are irreducible of dimension $2$.
By \cite[Theorem 8, p. 77]{shafarevich}, we deduce that $\sigma^{-1}(\Gamma)$ is irreducible.
Thus, $\xi (\sigma^{-1}(\Gamma))$ is irreducible. This set contains $[\calo_{E}(P'-P)]$
for $P' \in Z_i \setminus \cup_{k \neq i} Z_k$, $P \in Z_j \setminus \cup_{k \neq j} Z_k$.
The generic member of $\xi (\sigma^{-1}(\Gamma))$ is a sheaf supported on a smooth elliptic quartic curve.
We conclude that $[\calo_E(P' - P)] \in \EE_4$. Since $i$ and $j$ are arbitrary, the result is true for all $P'$, $P$ closed points on $E$.

\noindent \\
\emph{Claim 4}. In case (c), $E$ is a quadruple line supported on a line $L$. More precisely, there are three distinct planes
$H$, $H'$, $H''$ containing $L$, such that $E = (H \cup H') \cap (2H'')$.

\noindent \\
The claim will follow if we can show that there are linearly independent forms $u, v \in V^*$ such that $q_1, q_2 \in \CC[u, v]$.
Indeed, in this case $(q_1, q_2)$ has the normal form $(uv, (u+v)^2)$.
We argue by contradiction.
Assume that $q_1 = XY$ and $q_2 = Zl$. 
Consider first the case when $l = aX + bY + cZ$.
We will find $\lambda \in \CC$ such that $f = XY + \lambda Z l$ is irreducible,
which is equivalent to saying that
\[
\frac{\partial f}{\partial X} = Y + a\lambda Z, \qquad \frac{\partial f}{\partial Y} = X + b\lambda Z, \qquad \frac{\partial f}{\partial Z} = \lambda (aX + bY + 2cZ)
\]
have no common zero, or, equivalently,
\[
\left|
\ba{ccc}
0 & 1 & a\lambda \\
1 & 0 & b\lambda \\
a\lambda & b\lambda & 2c\lambda
\ea
\right| \neq 0.
\]
We have reduced to the inequality $2 a b \lambda^2 - 2c\lambda \neq 0$. If $c \neq 0$ we can find a solution.
If $c = 0$, then $a b \neq 0$, otherwise $q_1$ and $q_2$ would have a common factor, and we can choose any $\lambda \in \CC^*$.
Assume now that $l = aX + bY + cZ + dW$ with $d \neq 0$. Note that $f = XY + \lambda Z l$ is irreducible if its image in
\[
\CC[X, Y, Z, W]/\langle (c-1)Z + dW \rangle \simeq \CC[X, Y, Z]
\]
is irreducible. The above isomorphism sends $f$ to $XY + \lambda Z (aX + bY + Z)$ which brings us to the case examined above.

\noindent \\
\emph{Claim 5}. In case (c), $\F$ belongs to $\EE_4$.

\noindent \\
We have ${\calo_E}_{|H} \simeq \calo_C$, ${\calo_E}_{|H'} \simeq \calo_{C'}$ for conic curves $C$ and $C'$ supported on $L$.
The kernel of the map $\calo_E \to \calo_C$ has Hilbert polynomial $2t-1$ and is stable, because $\calo_E$ is stable,
hence it is isomorphic to $\calo_{C'}(-1)$.
We have a commutative diagram
\[
\xymatrix
{
0 \ar[r] & \calo_E \ar[r] \ar[d] & \calo_E(P') \ar[r] \ar[d] & \CC_{P'} \ar[r] \ar@{=}[d] & 0 \\
0 \ar[r] & \calo_C \ar[r] & \calo_E(P')_{|H} \ar[r] & \CC_{P'} \ar[r] & 0
}
\]
in which the second row is obtained by restricting the first row to $H$.
Applying the snake lemma, we obtain the first row of the following exact commutative diagram:
\[
\xymatrix
{
0 \ar[r] & \calo_{C'}(-1) \ar[r] & \calo_E(P') \ar[r] \ar[d] & \calo_E(P')_{|H} \ar[r] \ar[d]^-{\alpha} & 0 \\
& & \CC_P \ar@{=}[r] & \CC_P
}
\]
Applying the snake lemma to this diagram, we get the exact sequence
\[
0 \lra \calo_{C'}(-1) \lra \F \lra \Ker(\alpha) \lra 0.
\]
Note that $\Ker(\alpha)$ has Hilbert polynomial $2t+1$ and is semi-stable, being a quotient of the stable sheaf $\F$.
It follows that $\Ker(\alpha) \simeq \calo_C$.
Thus, $\F$ gives a point in the set $\PP \big( \Ext^1(\calo_C, \calo_{C'}(-1)) \big)^\st$ of stable non-split extensions of $\calo_C$ by $\calo_{C'}(-1)$.

Consider the family of planes $H''_t$, $t \in \PP^1 \setminus \{ 0, \infty \}$, containing $L$ and different from $H$ and $H'$.
Denote $E_t = (H \cup H') \cap (2 H''_t)$. We have a two-dimensional family of semi-stable sheaves
\[
\{ \calo_{E_t}(P' - P'') \mid t \in \PP^1 \setminus \{ 0, \infty \}, \ P'' \in L \setminus \{ P' \} \} \subset \PP \big( \Ext^1(\calo_C, \calo_{C'}(-1))\big).
\]
This family is dense in the right-hand-side because $\Ext^1_{\calo_{\PP^3}} (\calo_C, \calo_{C'}(-1)) \simeq \CC^3$.
To prove this we use the standard exact sequence obtained from Thomas' spectral sequence
\begin{multline*}
0 \to \Ext^1_{\calo_{H'}} ({\calo_C}_{| H'}, \calo_{C'}(-1)) \to \Ext^1_{\calo_{\PP^3}} (\calo_C, \calo_{C'}(-1)) \to \\ \Hom (\Tor_1^{\calo_{\PP^3}}(\calo_C, \calo_{H'}), \calo_{C'}(-1))
\to \Ext^2_{\calo_{H'}} ({\calo_C}_{| H'}, \calo_{C'}(-1)),
\end{multline*}
see also \cite[Lemma 4.2]{choi_chung_maican}. Note that ${\calo_C}_{| H'} \simeq \calo_L$. Using Serre duality we obtain the isomorphisms
\begin{align*}
\Ext^2_{\calo_{H'}} (\calo_L, \calo_{C'}(-1)) & \simeq \Hom_{\calo_{H'}} (\calo_{C'}(-1), \calo_L(-3))^* = 0, \\
\Ext^1_{\calo_{H'}} (\calo_L, \calo_{C'}(-1)) & \simeq \Ext^1_{\calo_{H'}} (\calo_{C'}(-1), \calo_L(-3))^* \simeq \CC^2.
\end{align*}
The last isomorphism follows from the long exact sequence of extension sheaves
\begin{multline*}
0 = \Hom (\calo_{H'}(-1), \calo_L(-3)) \lra \Hom (\calo_{H'}(-3), \calo_L(-3)) \simeq \H^0 (\calo_L) \simeq \CC \\
\lra \Ext^1_{\calo_{H'}} (\calo_{C'}(-1), \calo_L(-3)) \lra \\
\Ext^1_{\calo_{H'}} (\calo_{H'}(-1), \calo_L(-3)) \simeq \H^1(\calo_L(-2)) \simeq \CC \lra \Ext^1_{\calo_{H'}} (\calo_{H'}(-3), \calo_L(-3)) = 0
\end{multline*}
derived from the short exact sequence
\[
0 \lra \calo_{H'}(-3) \lra \calo_{H'}(-1) \lra \calo_{C'}(-1) \lra 0.
\]
Choose linear forms $u$ and $u'$ defining $H$ and $H'$. Restricting the standard resolution
\[
0 \lra \calo(-3) \xrightarrow{\tiny \left[ \!\! \ba{l} -u \\ (u')^2 \ea \!\!\! \right]} \calo(-2) \oplus \calo(-1)
\xrightarrow{\tiny \left[ \!\! \ba{cc} (u')^2 & \!\! u \ea \!\! \right]} \calo \lra \calo_C \lra 0
\]
to $H'$, we see that $\Tor_1^{\calo_{\PP^3}}(\calo_C, \calo_{H'})$ is isomorphic to the cohomology of the complex
\[
\calo_{H'}(-3) \xrightarrow{\tiny \left[ \!\! \ba{l} - u_{| H'} \\ \phantom{-} 0 \ea \!\! \right]} \calo_{H'}(-2) \oplus \calo_{H'}(-1)
\xrightarrow{\tiny \left[ \!\! \ba{cc} 0 & \!\! u_{| H'} \ea \!\! \right]} \calo_{H'}
\]
that is, to $\calo_L(-2)$. Using the fact that $\calo_{C'}(-1)$ and $\calo_L(-2)$ are reflexive, we have the isomorphisms
\[
\Hom (\calo_L(-2), \calo_{C'}(-1)) \simeq \Hom (\calo_{C'}(-1)^\dual, \calo_L(-2)^\dual) \simeq \Hom (\calo_{C'}, \calo_L) \simeq \CC.
\]
The above discussion shows that $[\F]$ is a limit of points in $\M_{\PP^3}(4m)$ of the form $[\calo_{E_t}(P' - P'')]$, with $P' \neq P''$.
Claim 5 now follows from Claim 3.

\noindent \\
It remains to consider sheaves $\F$ given by sequence (\ref{trivial_sequence}) in which $\G = \calo_E(P)$ and $E$ is as at (b).
We reduce further to the case when $E$ has no regular points.

\noindent \\
\emph{Claim 6}. Assume that $E$ has a regular point. Then $\F \simeq \calo_E$, hence $\F$ belongs to $\EE_4$.

\noindent \\
The proof of the claim is obvious because $P$ in sequence (\ref{trivial_sequence}) can be chosen arbitrarily on $E$.
We choose $P \in \operatorname{reg}(E)$. The kernel of the map $\calo_E(P) \to \CC_P$ is $\calo_E$. Note that $E$ belongs to the irreducible
component $\HH_1$ of $\Hilb_{\PP^3}(4t)$, hence it is the limit of smooth elliptic quartic curves.

\noindent \\
\emph{Claim 7}. Let $E \subset \PP^3$ be a quartic curve of arithmetic genus $1$ which is contained in an irreducible cone $\Sigma$,
but not in a smooth quadric surface. Assume that $E$ has no regular points. Then we have one of the following two possibilities:
\begin{enumerate}
\item[(b1)] $E = \Sigma \cap (H \cup H')$, where $H$, $H'$ are distinct planes each intersecting $\Sigma$ along a double line;
\item[(b2)] $E = \Sigma \cap (2H)$, where $H$ is a plane intersecting $\Sigma$ along a double line.
\end{enumerate}

\noindent \\
To fix notations assume that $\Sigma$ has vertex $O$ and base a conic curve $\Gamma$ contained in a plane $\Pi$.
Assume first that $E = \Sigma \cap \Sigma'$ for $\Sigma'$ another irreducible cone.
If $\Sigma$ and $\Sigma'$ have distinct vertices, then $E$ has regular points.
Thus, $\Sigma'$ has vertex $O$ and base an irreducible conic curve $\Gamma'$ contained in $\Pi$.
Since $E$ has no regular points, $\Gamma \cap \Gamma'$ is the union of two double points $Q_1$ and $Q_2$.
Now $E$ is the cone with vertex $O$ and base $Q_1 \cup Q_2$, so $E$ is as at (b1).

Assume next that $E = \Sigma \cap (H \cup H')$ for distinct planes $H$ and $H'$.
If $H$ or $H'$ does not contain $O$, then $E$ has regular points.
If $H$ or $H'$ is not tangent to $\Gamma$, then $E$ has regular points.
We deduce that $E$ is as in (b1).

Assume, finally, that $E = \Sigma \cap (2H)$ for a double plane $2H$.
If $O \notin H$, then it can be shown that $E$ is contained in a smooth quadric surface.
Indeed, assume that $\Sigma$ has equation $X^2 + Y^2 + Z^2 = 0$ and $H$ has equation $W=0$.
Then $E$ is contained in the smooth quadric surface with equation $X^2 + Y^2 + Z^2 + W^2 = 0$.
Thus, $O \in H$. If $\Gamma \cap H$ is the union of two distinct points, then $\Gamma \cap (2H)$ is the union of two double points
$Q_1$ and $Q_2$ and $E$ is as in (b1).
If $\Gamma \cap H$ is a double point, then $E$ is as in (b2).

\noindent \\
\emph{Claim 8}. In case (b1), $\F$ belongs to $\EE_4$.

\noindent \\
We have $\calo_{E | H} \simeq \calo_C$, $\calo_{E | H'} \simeq \calo_{C'}$ for conic curves $C$, $C'$ supported on lines $L$, respectively, $L'$.
Assume that $P \in L$ and choose a point $P' \in L$ not necessarily distinct from $P$.
Let $\F' \in \MM_4$ be given by the exact sequence
\[
0 \lra \F' \lra \calo_E(P') \lra \CC_P \lra 0.
\]
As in the first paragraph in the proof of Claim 5, we see that $\F'$ gives a point in the set $\PP\big( \Ext^1(\calo_C, \calo_{C'}(-1))\big)^\st$.
We have $\dim \Ext^1_{\calo_{\PP^3}}(\calo_C, \calo_{C'}(-1)) \le 2$. Indeed, start with the exact sequence
\begin{align*}
0 \to \Ext^1_{\calo_{H'}}(\calo_{C | H'}, \calo_{C'}(-1)) & \to \Ext^1_{\calo_{\PP^3}}(\calo_C, \calo_{C'}(-1)) \to \\ & \to \Hom(\Tor_1^{\calo_{\PP^3}}(\calo_C, \calo_{H'}), \calo_{C'}(-1)).
\end{align*}
The group on the second line vanishes because $\Tor_1^{\calo_{\PP^3}}(\calo_C, \calo_{H'})$ is supported on $O$ while $\calo_{C'}(-1)$ has no zero-dimensional torsion. It follows that
$$ \Ext^1_{\calo_{\PP^3}}(\calo_C, \calo_{C'}(-1)) \simeq \Ext^1_{\calo_{H'}}(\calo_{C | H'}, \calo_{C'}(-1)). $$
The sheaf $\calo_{C | H'}$ is the structure sheaf of a double point supported on $O$, hence we have the exact sequence
\[
\CC \simeq \Ext^1_{\calo_{H'}}(\CC_O, \calo_{C'}(-1)) \to \Ext^1_{\calo_{H'}}(\calo_{C | H'}, \calo_{C'}(-1)) \to \Ext^1_{\calo_{H'}}(\CC_O, \calo_{C'}(-1)) \simeq \CC
\]
from which we get our estimate on the dimension of the middle vector space.

The one-dimensional family $\calo_E(P' - P)$, $P' \in L \setminus \{ P \}$, is therefore dense in $\PP\big( \Ext^1(\calo_C, \calo_{C'}(-1))\big)^\st$,
hence, in view of Claim 3, $\F$ is a limit of sheaves in $\EE_4$. We conclude that $\F \in \EE_4$.

\noindent \\
\emph{Claim 9}. In case (b2), $\F$ belongs to $\EE_4$.

\noindent \\
Let $L$ be the reduced support of $\Sigma \cap H$. We have $\calo_{E | H} \simeq \calo_C$ for a conic curve supported on $L$.
Choose a point $P' \in L$ not necessarily distinct from $P$ and let $\F' \in \MM_4$ be given by the exact sequence
\[
0 \lra \F' \lra \calo_E(P') \lra \CC_P \lra 0.
\]
As in the first paragraph of the proof of Claim 5, we see that $\F'$ gives a point in the set $\PP\big( \Ext^1(\calo_C, \calo_C(-1))\big)^\st$.
We have $\dim \Ext^1_{\calo_{\PP^3}}(\calo_C, \calo_C(-1)) = 5$. This follows from the exact sequence
\begin{multline*}
0 \to \Ext^1_{\calo_H}(\calo_C, \calo_C(-1)) \to \Ext^1_{\calo_{\PP^3}}(\calo_C, \calo_C(-1)) \to \\
\Hom(\Tor_1^{\calo_{\PP^3}}(\calo_C, \calo_H), \calo_C(-1)) \to \Ext^2_{\calo_H}(\calo_C, \calo_C(-1)).
\end{multline*}

From Serre duality we get
\[
\Ext^2_{\calo_H}(\calo_C, \calo_C(-1)) \simeq \Hom_{\calo_H}(\calo_C(-1), \calo_C(-3))^* \simeq \H^0(\calo_C(-2))^* = 0.
\]
We have $\Tor_1^{\calo_{\PP^3}}(\calo_C, \calo_H) \simeq \calo_C(-1)$ hence $\Hom(\Tor_1^{\calo_{\PP^3}}(\calo_C, \calo_H), \calo_C(-1)) \simeq \CC$.
Applying the functor $\Hom(\cdot,\calo_C(-1))$ to the short exact sequence
\[
0 \lra \calo_H(-2) \lra \calo_H \lra \calo_C \lra 0
\]
we obtain the following exact sequence, 
\begin{multline*}
0 \to \Hom(\calo_H(-2), \calo_C(-1)) \simeq \H^0(\calo_C(1)) \simeq \CC^3 \\
\to \Ext^1_{\calo_H}(\calo_C, \calo_C(-1)) \to \\
\Ext^1_{\calo_H}(\calo_H, \calo_C(-1)) \simeq \H^1(\calo_C(-1)) \simeq \CC \to 0,
\end{multline*}
since $\Hom(\calo_H, \calo_C(-1)) \simeq \H^0(\calo_C(-1)) = 0$, and 
$\Ext^1_{\calo_H}(\calo_H(-2), \calo_C(-1)) \simeq \H^1(\calo_C(1)) = 0$.

Denote $Q = L \cap \Pi$. We have a three-dimensional family $\Gamma_t$ of irreducible conic curves in $\Pi$ that contain $Q$
and are tangent to $H$. Let $\Sigma_t$ be the cone with vertex $O$ and base $\Gamma_t$.
Put $E_t = \Sigma_t \cap (2H)$. The four-dimensional family $\calo_{E_t}(P' - P)$, $P' \in L \setminus \{ P \}$ is dense in
$\PP\big( \Ext^1(\calo_C, \calo_C(-1))\big)^\st$, hence, in view of Claim 3, $\F$ is the limit of sheaves in $\EE_4$.
We conclude that $\F \in \EE_4$.
\end{proof}

The proof of Main Theorem \ref{mthm2} is finally complete.



\section{Components and connectedness of $\call(3)$} \label{L(3) section}

We are now ready to prove that the moduli space of rank 2 instanton sheaves of charge 3 on $\p3$ is connected and has precisely two irreducible components. Indeed, the two components of $\overline{\call(3)}$ have already been identified above:
\begin{itemize}
\item[(I)] $\overline{\cali(3)}$, whose generic point corresponds to locally free instanton sheaves.
\item[(II)] $\overline{\calc(1,3,0)}$, whose generic point corresponds to instanton sheaves $E$ fitting into exact sequences of the form
\begin{equation}\label{C(1,3,0)}
0 \to E \to 2\cdot \op3 \to \iota_*L(2) \to 0
\end{equation}
where $\iota:\Sigma\hookrightarrow\p3$ is the inclusion of a nonsingular plane cubic $\Sigma$, and $L\in{\rm Pic}^0(\Sigma)$ is such that $h^0(\Sigma,L)=0$.
\end{itemize}
Both components have dimension 21; this is classical result for the component $\cali^{0}(3)$, while the dimension of $\calc(1,3,0)$ is given by Theorem \ref{dim C thm}. In addition, this same result also guarantees that the union $\cali^{0}(3)\cup \calc(1,3,0)$ is connected.

Therefore, our task is to prove that $\overline{\call(3)}$ has no other irreducible components, i.e. that every instanton sheaf of charge 3 can be deformed either into a locally free instanton sheaf, or into an instanton sheaf given by a sequence of the form (\ref{C(1,3,0)}). 

So let $E$ be a non locally free instanton sheaf of charge 3, and let $Q_E:=E^{\vee\vee}/E$ be the corresponding rank 0 instanton sheaf; let $d_E$ denote the degree of $Q_E$. There are three possibilities to consider: $d_E=1$, $d_E=2$ and $d_E=3$.

The first possibility is easy to deal with: if $d_E=1$, then $Q_E=\calo_\ell(1)$, where $\ell\hookrightarrow\p3$ is a line in $\p3$. It follows that $E$ fits into an exact sequence of the form
$$ 0 \to E \to F \to \calo_\ell(1) \to 0 ~, $$
where $F$ is a locally free instanton sheaf of charge 2. However, \cite[Proposition 7.2]{JMT1} ensures that $E$ can be deformed in a ('t Hooft) locally free instanton sheaf of charge 3. In other words, if $d_E=1$, then $E$ lies within $\cali^0(3)$.

Now, if $d_E=2$, then, since $Q_E$ is semistable and by Proposition \ref{components_2} above, one can find an affine open subset $0\in U\subset\bA^1$ and a coherent sheaf $\mathbf{G}$ on $\p3\times U$ such that $G_0=Q_E$ and, for $u\ne 0$, 
\begin{itemize}
\item[\bf{(i)}] either $G_u=\calo_\Gamma(3\mathrm{pt})$, where $\Gamma$ is a nonsingular conic in $\p3$; 
\item[\bf{(ii)}] or $G_u=\calo_{\ell_1}(1)\oplus\calo_{\ell_2}(1)$ where $\ell_1$ and $\ell_2$ are skew lines in $\p3$. 
\end{itemize}

Since $d_E=2$, $E^{\vee\vee}$ is a locally free instanton sheaf of charge 1 (a.k.a. a null-correlation bundle); we set $N:=E^{\vee\vee}$ denote such sheaf. Take $\mathbf{F}:=\pi^*N$, where $\pi:\p3\times U \to \p3$ is the projection onto the first factor. Let $s:N\twoheadrightarrow Q_E$ be the epimorphism given by the standard sequence (\ref{std dual sqc}). For every $u\in U$, the sheaf $\inhom(F_u,G_u)\simeq N\otimes G_u$ is supported in dimension 1, thus clearly $H^i(\inhom(F_u,G_u))=0$ for $i=2,3$. For $u\ne 0$ we can, after possibly shrinking $U$, assume that either $N|_\Gamma\simeq 2\cdot\calo_\Gamma$ or $N|_{\ell_1}\simeq 2\cdot\calo_{\ell_1}$ and $N|_{\ell_2}\simeq 2\cdot\calo_{\ell_2}$; in both situations, it is easy to check that $H^1(\inhom(F_u,G_u))=0$. Finally, for $u=0$, we twist the resolution of $Q_E$
$$ 0 \to 2\cdot \op3(-1) \stackrel{\alpha}{\longrightarrow} 4\cdot \op3 \stackrel{\beta}{\longrightarrow} 2\cdot \op3(1) \to Q_E \to 0 $$
by $N$ and check that $H^1(\inhom(F_0,G_0))=H^1(N\otimes Q_E)\simeq H^2(N\otimes B)=0$, where $B:=\im\beta$.

Therefore, it follows from Lemma \ref{F,G} that there exists an epimorphism $\mathbf{s}:\mathbf{F}\twoheadrightarrow\mathbf{G}$ extending $s:N\twoheadrightarrow Q_E$. Let $\mathbf{E}:=\ker\mathbf{s}$; clearly, $E_0:=\mathbf{E}|_{\{0\}\times\p3}=E$. For $u\ne 0$, $E_u$ fits into the exact sequence
$$ 0 \to E_u \to N \to G_u \to 0. $$

In the case {\bf (i)} described above, $G_u$ lies within $\cald(2,3)$ for $u\ne0$, hence $E=E_0$ lies within $\overline{\cald(2,3)}$, which is contained in $\overline{\cali(3)}$ by \cite[Theorem 7.8]{JMT1}. In other words, $E_0$ can be deformed into a locally free instanton sheaf of charge 3, thus it lies within $\cali^0(3)$.

In the case {\bf (ii)}, Propostion \ref{disjoint rat curves} also implies that $[E_0]\in\cali^0(3)$.

An argument similar to the one used in the proof of \cite[Proposition 7.2]{JMT1} works to show that $E$ can be deformed into a locally free ('t Hooft) instanton sheaf.

Summing up, we conclude that if $d_E=2$, then $E$ lies within $\cali^0(3)$.

Finally, consider $d_E=3$, so that $E^{\vee\vee}=2\cdot\op3$. Since $Q_E$ is semistable, it follows from Proposition \ref{components_3} that one can find an affine open subset $0\in U\subset\bA^1$ and a coherent sheaf $\mathbf{G}$ on $\p3\times U$ such that $G_0=Q_E$ and, for $u\ne 0$, 
\begin{itemize}
\item[{\bf (i)}] either $G_u=\calo_\Delta(5\mathrm{pt})$, where $\Delta$ is a nonsingular twisted cubic in $\p3$;
\item[{\bf (ii)}] or $G_u=\calo_\Gamma(3\mathrm{pt})\oplus
\calo_{\ell}(1)$, where $\Gamma$ is a nonsingular conic and $\ell$ is a line disjoint from $\ell$;
\item[{\bf (iii)}] or $G_u=\calo_{\ell_1}(1)\oplus\calo_{\ell_2}(1)\oplus\calo_{\ell_3}(1)$ where $\ell_j$ are 3 skew lines in $\p3$;
\item[{\bf (iv)}] or $G_u=L(2)$, where $L\in\Pic^0(\Sigma)$, for some nonsingular plane cubic $\Sigma$ in $\p3$. 
\end{itemize}

Now set $\mathbf{F}:2\cdot\pi^*\op3$. Note that $H^i(\inhom(F_u,G_u))=H^i(2\cdot G_u)$, and this vanishes for $i=1,2,3$ in all of the four cases outlined above for $u\ne0$; for $u=0$, $H^i(G_0)=H^i(Q_E)$ and this vanishes by dimension of $Q_E$ when $i=2,3$, and by the vanishing of $h^1(Q_E(-2))$ when $i=1$.

We complete the argument as before; again, it follows from Lemma \ref{F,G} that there exists an epimorphism $\mathbf{s}:\mathbf{F}\twoheadrightarrow\mathbf{G}$ extending the epimorphism $s:2\cdot\op3\twoheadrightarrow Q_E$ obtained from the standard sequence (\ref{std dual sqc}) for $E$. Let $\mathbf{E}:=\ker\mathbf{s}$; clearly, $E_0:=\mathbf{E}|_{\{0\}\times\p3}=E$. For $u\ne 0$, $E_u$ fits into the exact sequence
$$ 0 \to E_u \to 2\cdot\op3 \to G_u \to 0. $$

In the cases {\bf (i)} through {\bf (iii)}, we know from \cite[Theorem 7.8]{JMT1} and Proposition \ref{disjoint rat curves} above that $[E_0]\in\overline{\cald(3,3)}$, thus also $[E_0]\in\cali^0(3)$.

In the case {\bf (iv)}, $E_u$ lies within $\calc(1,3,0)$, by definition. This completes the proof of the first part of Main Theorem \ref{mthm1}.


\section{Components and connectedness of $\call(4)$} \label{L(4) section}

In this section we prove the second part of Main Theorem \ref{mthm1}, i.e. we enumerate the irreducible components of $\call(4)$ and show that $\call(4)$ is connected. Note that we already know from Theorem \ref{dim C thm} four irreducible components of $\overline{\call(4)}$:

\begin{itemize}
\item[(I)] $\overline{\cali(4)}$, whose generic point corresponds to locally free instanton sheaves;
\item[(II)] $\overline{\calc(1,3,1)}$, whose generic point corresponds to instanton sheaves $E$ fitting into exact sequences of the form
\begin{equation}\label{C(1,3,1)}
0 \to E \to N\to \iota_*L(2) \to 0,
\end{equation}
where $\iota:\Sigma\hookrightarrow\p3$ is the inclusion of a nonsingular plane cubic $\Sigma$, and $L\in{\rm Pic}^0(\Sigma)$ is such that $h^0(\Sigma,L)=0$;
\item[(III)] $\overline{\calc(2,2,0)}$, whose generic point corresponds to instanton sheaves $E$ fitting into exact sequences of the form
\begin{equation}\label{C(2,2,0)}
0 \to E \to 2\cdot\op3\to \iota_*L(2) \to 0,
\end{equation}
where $\iota:\Sigma\hookrightarrow\p3$ is the inclusion of a nonsingular elliptic space quartic $\Sigma$, and $L\in{\rm Pic}^0(\Sigma)$ is such that $h^0(\Sigma,L)=0$;
\item[(IV)] $\overline{\calc(1,4,0)}$, whose generic point corresponds to instanton sheaves $E$ fitting into exact sequences of the form
\begin{equation}\label{C(1,4,0)}
0 \to E \to 2\cdot\op3\to \iota_*L(2) \to 0,
\end{equation}
where $\iota:\Sigma\hookrightarrow\p3$ is the inclusion of a nonsingular plane quartic $\Sigma$, and $L\in{\rm Pic}^2(\Sigma)$ is such that $h^0(\Sigma,L)=0$.
\end{itemize}

The first three components have dimension 29, and the last one has dimension 32; this is a classical result for the component $\cali^0(4)$, while the dimensions of $\calc(1,3,1)$,  $\calc(2,2,0)$ and $\calc(1,4,0)$ are given by Theorem \ref{dim C thm} above. Furthermore, \cite[Theorem 23]{JMT2} implies that each of the last three components intersects $\cali^0(4)$. Thus the union of these four components is connected.

To finish the proof of the second part of Main Theorem \ref{mthm1}, it is again enough to show that there are no other components in $\call(4)$, except for those described above. The argument here is the same as before, exploring Theorem \ref{jg-thm}, Remark \ref{remark 4} and Proposition \ref{components_4}.  

Take any $[E]\in\call(4)$ and consider the triple (\ref{std dual sqc}). Then, in view of Theorem \ref{jg-thm} and Remark \ref{remark 4}, $Q_E$ is a rank 0 instanton sheaf of multiplicity
$1\le d_E\le 4$, and $E^{\vee\vee}$ is an instanton bundle of charge $4-d_E$. Consider the possible cases for $d_E$.

\medskip

\noindent {$\mathbf{d_E=1.}$} As in the similar case in Section \ref{L(3) section},
$Q_E=\calo_{\ell}(1)$ where $l$ is a line in $\p3$. Respectively, $[E^{\vee\vee}]\in\cali(3)$. Deforming $\ell$ in $\p3$ we may assume that $E^{\vee\vee}|_{\ell}\simeq2\cdot\calo_{\ell}$, so that $[E]\in\cald(1,4)$. Therefore, $[E]\in\cali^0(4)$.

\medskip

\noindent{$\mathbf{d_E=2.}$} As in the similar case in Section \ref{L(3) section},
$Q_E$ can be deformed in a flat family either into a sheaf $\calo_\Gamma(3\mathrm{pt})$, where $\Gamma$ is a nonsingular conic in $\p3$, or into a sheaf $\calo_{\ell_1}(1)\oplus\calo_{\ell_2}(1)$ where $\ell_1$ and $\ell_2$ are skew lines in $\p3$. Respectively, $[E^{\vee\vee}]\in\cali(2)$. Now the same argument as in Section \ref{L(3) section} shows that $[E]\in\cali^0(4)$.

\medskip

\noindent{$\mathbf{d_E=3.}$} Then $E^{\vee\vee}$ is a null-correlation bundle
and, as in the case $d_E=3$ of Section \ref{L(3) section}, the sheaf $Q_E$ deforms in a flat family to one of the sheaves:

\begin{itemize}
\item[{\bf (i)}] $L(2)$, where $L\in\Pic^0(\Sigma)$, for some nonsingular plane cubic $\Sigma$ in $\p3$. 
\item[{\bf (ii)}] $\calo_\Delta(5\mathrm{pt})$, where $\Delta$ is a nonsingular twisted cubic in $\p3$;
\item[{\bf (iii)}] $\calo_\Gamma(3\mathrm{pt})\oplus\calo_{\ell}(1)$, where $\Gamma$ is a nonsingular conic and $\ell$ is a line disjoint from $\ell$;
\item[{\bf (iv)}] $\calo_{\ell_1}(1)\oplus\calo_{\ell_2}(1)\oplus
\calo_{\ell_3}(1)$ where $\ell_j$ are 3 skew lines in $\p3$.
\end{itemize}
By definition, $[E]\in\overline{\calc(1,3,1)}$ in case {\bf (i)}. The same argument as in Section \ref{L(3) section}, based on \cite[Theorem 7.8]{JMT1} and Proposition \ref{disjoint rat curves}, shows that $[E]\in\overline{\cali(4)}$ in cases {\bf (ii)} through {\bf (iv)}.

\medskip

\noindent$\mathbf{d_E=4.}$ Then $E^{\vee\vee}\simeq2\cdot\calo_{\p3}$ and,
according to Proposition \ref{components_4}, the sheaf $Q_E$ deforms in a flat family to one of the sheaves: 

\begin{itemize}
\item[{\bf (i)}] $L(2)$, where $L\in\Pic^2(\Sigma)$, for some nonsingular plane quartic $\Sigma$ in $\p3$, and $L$ satisfies an open condition $h^1(L)=0$; 
\item[{\bf (ii)}] $L(2)$, where $0\ne L\in\Pic^0(\Delta)$, for some nonsingular space elliptic quartic $\Delta$ in $\p3$; 
\item[{\bf (iii)}] $\calo_{\Delta}(7\mathrm{pt})$ for some nonsingular rational space quartic $\Delta$ in $\p3$; 
\item[{\bf (iv)}] $L(2)\oplus\calo_{\ell}(1)$, where $L\in\Pic^0(\Sigma)$, for some nonsingular plane cubic $\Sigma$ in $\p3$ and a line $\ell$ disjoint from $\Sigma$; 
\item[{\bf (v)}] $\calo_\Delta(5\mathrm{pt})\oplus\calo_{\ell}(1)$, where $\Delta$ is a nonsingular twisted cubic and $\ell$ is a line disjoint from $\Delta$;
\item[{\bf (vi)}] $\calo_\Gamma(3\mathrm{pt})\oplus\calo_{\ell_1}(1)
\oplus\calo_{\ell_2}(1)$, where $\Gamma$ is a nonsingular conic and $\ell_1,\ \ell_2$ are two skew lines disjoint from $\Gamma$;
\item[{\bf (vii)}] $\calo_{\ell}(1)\oplus\calo_{\ell_2}(1)
\oplus\calo_{\ell_3}(1)\oplus\calo_{\ell_4}(1)$, where $\ell_1,\ \ell_2,\ \ell_3,\ \ell_4$ are four disjoint lines in $\p3$. 
\end{itemize}

In case {\bf (i)}, since, in the notation of Lemma  $\ref{F,G}$, $F_0=2\cdot\calo_{\p3}$ and $G_0=L$, hence $H^i(\inhom(F_0,G_0))=0,\ \ \ i\ge1,$ and therefore the condition 
(\ref{vanish Hi}) is satisfied by the semicontinuity, so that the deformation argument as above shows that $E\in\overline{\calc(1,4,0)}$.

In case {\bf (ii)}, by the same reason, $[E]\in\overline{\call(2,2,0)}$. 

In case {\bf (iii)}, a similar argument shows that $[E]\in\overline{\cald(4,4)}$, and therefore $[E]\in\overline{\cali(4)}$.

In case {\bf (iv)}, Proposition \ref{disjoint curves} above guarantees that $[E]\in\overline{\call(1,3,1)}$.

In the cases remaining, {\bf (v)} through {\bf (vii)}, as in cases {\bf (ii)} and {\bf (iii)} for $d_E=3$ above, we again obtain $[E]\in\overline{\cali(4)}$.

Main Theorem \ref{mthm1} is finally proved.


\end{document}